\newtheorem{theorem}{Theorem}
\newenvironment{proof}{\vspace{1ex}\noindent{\em Proof.}\hspace{0.5em}}
{\hfill\qed\vspace{1ex}}
\def\qed{\hfill \vrule height 6pt width 6pt depth 0pt}
\begin{document}

\title[Precise Mechanical Control]{Achieving Precise Mechanical Control in
Intrinsically Noisy Systems}
\author{{\small Wenlian Lu$^{1,2,3}$,\quad Jianfeng Feng$^{1,2}$,\quad
Shun-ichi Amari$^{3}$\quad and David Waxman$^{1}$}}

\begin{abstract}
How can precise control be realised in intrinsically noisy systems? Here, we
develop a general theoretical framework that provides a way to achieve
precise control in signal-dependent noisy environments. When the control
signal has Poisson or supra-Poisson noise, precise control is not possible.
If, however, the control signal has sub-Poisson noise, then precise control
is possible. For this case, the precise control solution is not a function,
but a rapidly varying random process that must be averaged with respect to a
governing probability density functional. Our theoretical approach is
applied to the control of straight-trajectory arm movement. Sub-Poisson
noise in the control signal is shown to be capable of leading to precise
control. Intriguingly, the control signal for this system has a natural
counterpart, namely the bursting pulses of neurons --trains of
Dirac-delta functions-- in biological systems to achieve precise control
performance.
\end{abstract}

\maketitle

\address{\small $^{1}$ Centre for Computational Systems Biology and School of Mathematical
Sciences, Fudan University, Shanghai 200433, China}
\address{\small $^{2}$ Centre for Scientific Computing, the University of Warwick, Coventry CV4
7AL, UK}
\address{\small $^{3}$Brain Science Institute, RIKEN, Wako-shi,
Saitama 351-0198, Japan} \ead{jianfeng64@gmail.com}


\section{Introduction}

Many mechanical and biological systems are controlled by signals which
contain noise. This poses a problem. The noise apparently corrupts the
control signal, thereby preventing precise control. However, precise control
can be realised, despite the occurrence of noise, as has been demonstrated
experimentally in biological systems. For example, in neural-motor control,
as reported in \cite{OLB}, the movement error is believed to be mainly due
to inaccuracies of the neural-sensor system, and not associated with the
neural-motor system.

The minimum-variance principle proposed in \cite{Harris,Harris1} has greatly
influenced the theoretical study of biological computation. Assuming
the magnitude of the noise in a system depends strongly on the magnitude of
the signal, the conclusion of \cite{Harris,Harris1} is that a biological
system is controlled by minimising the execution error.

A key feature of the control signal in a biological system is that
biological computation often only takes on a finite number of values. For
example, `bursting' neuronal pulses in the neural-motor system control seem
very likely to have only three states, namely inactive, excited, and
inhibited. This kind of signal (neuronal pulses) can be abstracted as a
dynamic trajectory which is zero for most of the time, but intermittently
takes a very large value. Generally, this kind of signal looks like a train
of irregularly spaced Dirac-delta functions. In this work we shall
theoretically investigate the way signals in realistic biological
systems are associated with precise control performance. We shall use
bursting neuronal pulse trains as a prototypical example of this phenomenon.

In a biological system, noise is believed to be inevitable and essential; it
is a part of a biological signal and, for example, the magnitude of the
noise typically depends strongly  on the magnitude of the
signal \cite{Harris,Harris1}. One characteristic of the noise in a system is
the dispersion index, $\alpha $, which describes the statistical regularity
of the control signal. When the variance in the control signal is
proportional to the $2\alpha $-th power of the mean control signal, the
dispersion index of the control noise is said to be $\alpha $. It was shown
in \cite{Harris,Harris1} and elsewhere (e.g., \cite{Feng1,Feng11}) that an
optimal solution of analytic form can be found when the stochastic control
signal is supra-Poisson, i.e., when $\alpha \geq 0.5$. However, the
resulting control is not precise and a non-zero execution error arises. In
recent papers, a novel approach was proposed to find the optimal solution
for control of a neural membrane \cite{Feng2}, and a model of saccadic
eye movement \cite{Ross}. It was shown that if the noise of the control
signal is more regular than Poisson process (i.e., if it is sub-Poisson,
with $\alpha <0.5$), then the execution error can be shown to reduce towards
zero \cite{Feng2,Ross}. This work employed the theory of Young measures \cite%
{Young,Young1}, and involved a very specific sort of solution (a `relaxed
optimal parameterized measure solution'). We note that \textit{many}
biological signals are more regular than a Poisson process: e.g., within
in-vivo experiments, it has often been observed that neuronal pulse signals
are sub-Poisson in character ($\alpha <0.5$) \cite{Sub,Chris}. However, in
\cite{Feng2,Ross}, only a one-dimensional linear model was studied in
detail. Thus the results and methods cannot be applied to the control of
general dynamical systems. The work of \cite{Feng2,Ross} however, leads
to a much harder problem: the general mathematical link between the
regularity of the signal's noise and the control performance that can be
achieved.

In the present work we establish some general mathematical principles
linking the regularity of the noise in a control signal with the
precision of the resulting control performance, for \textit{general}
nonlinear dynamical systems of high dimension. We establish a general
theoretical framework that yields \textit{precise control} from a \textit{%
noisy controller} using modern mathematical tools. The control signal is
formulated as a Gaussian (random) process with a signal-dependent variance.
Our results show that if the control signal is more regular than a
Poisson process (i.e., if $\alpha <0.5$), then the control optimisation
problem naturally involves solutions with a specific singular character
(parameterized measure optimal solutions), which can achieve precise control
performance. In other words, we show how to achieve results where the
variance in control performance can be made arbitrarily small. This is in
clear contrast to the situation where the control signals are Poisson or
more random than Poisson ($\alpha \geq 0.5$), where the optimal control
signal is an ordinary function, not a parameterized measure, and the
variance in control performance does not approach zero. The new results
can be applied to a large class of control problems in nonlinear dynamical
systems of high dimension. We shall illustrate the new sort of solutions
with an example of neural-motor control, given by the control of
straight-trajectory arm movements, where neural pulses act as the control
signals. We show how pulse trains may be realised in nature which lead
towards the optimisation of control performance.

\section{Model and Mathematical Formulation}

To establish a theoretical approach to the problem of noisy control, we
shall consider the following general system
\begin{equation}
\frac{dx}{dt}=a(x(t),t)+b(x(t),t)u(t)  \label{general}
\end{equation}%
where: $t$ is time ($t\geq 0$), $x(t)=[x_{1}(t),\ldots ,x_{n}(t)]^{\top }$
is a column vector of `coordinates' describing the state of the system to be
controlled (a $\top $-superscript denotes transpose) and $%
u(t)=[u_{1}(t),\ldots ,u_{m}]^{\top }$, is a column vector of the signals
used to control the $x$ system. The dynamical behaviour of the $x$ system,
in the absence of a control signal, is determined by $a(x,t)$ and $b(x,t)$,
where $a(x,t)$ consists of $n$ functions: $a(x,t)=[a_{1}(x,t),\ldots
,a_{n}(x,t)]^{\top }$ and $b(x,t)$ is an $n\times m$ `gain matrix' with
elements $b_{ij}(x,t)$. The system (\ref{general}) is a generalisation of
the dynamical systems studied in the literature \cite%
{Harris,Harris1,Feng2,Ross}.

As stated above, the control signal, $u(t)$, contains noise. We follow
Harris's work \cite{Harris,Harris1} on signal-dependent noise theory by
modelling the components of the control signal as%
\begin{equation}
u_{i}(t)=\lambda _{i}(t)+\zeta _{i}(t)  \label{ui}
\end{equation}%
where $\lambda _{i}(t)$ is the mean control signal at time $t$ of the $i$'th
component of $u(t)$ and all noise (randomness) is contained in $\zeta _{i}(t)
$. In particular, we take the $\zeta _{i}(t)$ to be an independent
Gaussian white noises obeying $\mathbf{E}\left[ \zeta _{i}(t)\right] =0$%
, $\mathbf{E}\left[ \zeta _{i}(t)\zeta _{j}(t^{\prime })\right] =\sigma
_{i}(t)\sigma _{j}(t^{\prime })\delta (t-t^{\prime })\delta _{ij}$ where $%
\mathbf{E}\left[ \cdot \right] $ denotes expectation, $\delta (\cdot )$
is Dirac-delta function, and $\delta _{ij}$ is Kronecker delta. The
quantities $\sigma _{i}(t)$, which play the role of standard deviations of
the $\zeta _{i}(t)$, are taken to explicitly depend on the mean magnitudes
of the control signals:
\begin{equation}
\sigma _{i}(t)=\kappa _{i}|\lambda _{i}(t)|^{\alpha }  \label{sigma i}
\end{equation}%
where $\kappa _{i}$ is a positive constant and $\alpha $ is the dispersion
index of the control process (described above).

Thus, we can formulate the dynamical system, Eq. (\ref{general}), as a
system of It\^{o} diffusion equations:
\begin{equation}
dx=A(x,t,\lambda )dt+B(x,t,\lambda )dW_{t}  \label{Ito}
\end{equation}%
where: (i) $W_{t}=[W_{1,t},\ldots ,W_{m,t}]^{\top }$ contains $m$
independent standard Wiener processes; (ii) the quantity $A(x,t,\lambda )$
denotes the column vector $[A_{1}(x,t,\lambda ),\ldots ,A_{n}(x,t,\lambda
)]^{\top }$, the $i$'th component of which has the form $A_{i}(x,t,\lambda
)=a_{i}(x,t)+\sum_{j=1}^{m}b_{ij}(x,t)\lambda _{j}$; (iii) the quantity $%
B(x,t,\lambda )$ is the matrix, the $i,j$'th element of which is given by $%
B_{ij}(x,t,\lambda )=b_{ij}(x,t)\kappa _{j}|\lambda _{j}|^{\alpha }$ where $%
i=1,\cdots ,n$ and $j=1,\ldots ,m$. We make the assumption that the range of
each $\xi _{i}$ is bounded: $-M_{Y}\leq \lambda _{i}\leq M_{Y}$ with $M_{Y}$
a positive constant. Let $\Omega =[-M_{Y},M_{Y}]^{m}$ be the region where
the control signal takes values, with $^{m}$ denoting the $m$-order
Cartesian product. Let $\Xi $ be state space of $x$. In this paper we
assume it be bounded.

Let us now introduce the function $\phi (x,t)=[\phi _{1}(x,t),\cdots ,\phi
_{k}(x,t)]^{\top }$, which represents the objective that is to be controlled
and optimised. For example, for a linear output we can take $\phi (x,t)=Cx$
for some $k\times n$ matrix $C$; in the case that we control the magnitude
of $x$, we can take {$\phi (x,t)=\Vert x\Vert _{2}$; we may even allow
dependence on time, for example if the output decays exponentially with
time, we can take $\exp (-\gamma t)x(t)$ for some constant $\gamma>0$. }

The aim of the control problem we consider here is: (i) to ensure the
expected trajectory of the objective $\phi (x(t),t)$ reaches a specified
target at a given time, $T$, and (ii) to minimise the \textit{execution error}
accumulated by the system, during the time, $R$, that the system is required
to spend at the `target' \cite{Harris,Harris1,Feng2,Ross,Win,Sim,Tan,Ike}.
In the present context, we take the motion to start at time $t=0$ and
subject to the initial condition $x(0)$. The target has coordinates $z(t)$
and we need to choose the controller, $u(t)$, so that for the time interval $%
T\leq t\leq T+R $ the expected state of the objective $\phi(x(t),t)$ of the
system satisfies $\mathbf{E}[\phi (x(t),t)]=z(t)$. The \textit{accumulated
execution error} is $\int_{T}^{T+R}\sum_{i}var\left( \phi
_{i}(x(t),t)\right) dt$ and we require this to be minimised.

Statistical properties of $x(t)$ can be written in terms of $p(x,t)$, the
probability density of the state of the system (\ref{Ito}) at time $t$,
which satisfies the Fokker-Plank equation%
\begin{equation}
\frac{\partial p(x,t)}{\partial t}=\mathcal{L}\circ p~;~p(x,0)=\delta
(x-x(0)),~t\in\lbrack0,T+R] \label{FPE}
\end{equation}
with
\begin{eqnarray*}
\mathcal{L}\circ p & =-\sum_{i=1}^{n}\frac{\partial\lbrack
a_{i}(x,t)+\sum_{j=1}^{m}b_{ij}(x,t)\lambda_{j}(t))p(x,t)]}{\partial x_{i}}
\\
& +\frac{1}{2}\sum_{i,j=1}^{n}\frac{\partial^{2}\{\sum_{k=1}^{m}\kappa
_{k}^{2}b_{ik}(x,t)b_{jk}(x,t)|\lambda_{k}(t)|^{2\alpha}p(x,t)\}}{\partial
x_{i}\partial x_{j}}.
\end{eqnarray*}

Three important quantities are the following:

\begin{itemize}
\item[(A)] The \textit{accumulated} \textit{execution error}: $%
\int_{T}^{T+R}\int_{\Xi}\left\Vert\phi(x,t)-z(t)\right\Vert ^{2}p(x,t)dxdt$;

\item[(B)] The \textit{expectation condition} on $x(t)$: $\int_{\Xi}
\phi(x,t)p(x,t)dx=z(t)$, for all $t$ in the interval $T\leq t\leq R+T$;

\item[(C)] The \textit{dynamical equation} of $p(x,t)$ described as (%
\ref{FPE}).
\end{itemize}

\section{The Young Measure Optimal Solution}

To illustrate the idea of the solutions we introduce here, namely Young
measure optimal solutions, we provide a simple example. Consider the
situation where $x$ and $u$ are one-dimensional functions, while $a(x,t)=px$%
, $b(x,t)=q$, $\kappa =1$, $z(t)=z_{0}$ and $\phi (x,t)=x$. Thus (\ref%
{general}) becomes%
\begin{equation}
\frac{dx}{dt}=px+qu.  \label{linear}
\end{equation}%
This has the solution $x(t)=x_{0}\exp (pt)+\int_{0}^{t}\exp (p(t-s))q\lambda
(s)ds+\int_{0}^{t}\exp (p(t-s))q|\lambda (s)|^{\alpha }dW_{s}$. Thus, its
expectation is $\mathbf{E}(x(t))=x_{0}\exp (pt)+\int_{0}^{t}\exp
(p(t-s))q\lambda (s)ds$ and its variance is ${\mathrm{v}ar}%
(x(t))=\int_{0}^{t}\exp (2p(t-s))q^{2}|\lambda (s)|^{2\alpha }ds$. The
solution of the optimisation problem is the minimum of the following
functional:
\begin{eqnarray}
&&H[\lambda ]=\int_{T}^{T+R}\int_{0}^{t}\exp (2p(t-s))q^{2}|\lambda
(s)|^{2\alpha }dsdt  \nonumber \\
&&+\int_{T}^{T+R}\left\{ \gamma (t)[x_{0}\exp (pt)+\int_{0}^{t}\exp
(p(t-s))q\lambda (s)ds-z_{0}]\right\} dt  \nonumber \\
&=&\int_{T}^{T+R}\{[g(t)|\lambda (t)|^{2\alpha }-f(t)\lambda (t)]+\mu (t)\}dt
\label{eg1}
\end{eqnarray}%
with
\begin{eqnarray*}
g(t) &=&\left\{
\begin{array}{ll}
\frac{q^{2}}{2p}\bigg[\exp (2p(T+R-t))-\exp (2p(T-t))\bigg] & t\leq T \\
\frac{q^{2}}{2p}\bigg[\exp (2p(T+R-t))-1\bigg] & T+R\geq t>T%
\end{array}%
\right.  \\
f(t) &=&\left\{
\begin{array}{ll}
-\int_{T}^{T+R}q\gamma (s)\exp (p(s-t))ds & t\leq T \\
-\int_{t}^{T+R}q\gamma (s)\exp (p(s-t))ds & T+R\geq t>T%
\end{array}%
\right.  \\
\mu (t) &=&\gamma (t)[x_{0}\exp (pt)-z_{0}],
\end{eqnarray*}%
for some integrable function $\gamma (t)$, which serves as a Lagrange
auxiliary multiplier.

In the general case, we minimise (A) using (B) and (C) as constraints via
the introduction of appropriate $x$ and $t$ dependent Lagrange multipliers.
This leads to a functional of the mean control signal, $H\left[ \lambda %
\right] $, with the form $H[\lambda ]=\int_{T}^{T+R}h(t,\lambda (t))dt$ (see
below and Appendix A). Let us use $\xi =[\xi _{1},\cdots ,\xi _{m}]^{\top }$
to denote the value of $\lambda (t)$ at a given time of $t$, i.e., ${\xi }%
=\lambda (t)$; $\xi$ will serve as a variable of the Young measure (see
below). We find
\begin{equation}
h(t,\xi )=\sum_{i=1}^{m}[g_{i}(t)|\xi _{i}|^{2\alpha }-f_{i}(t)\xi _{i}]+z(t)
\label{h}
\end{equation}%
where $g_{i}(t)$, $f_{i}(t)$ and $z(t)$ are functions with respect to $t$
but are independent of the variable $\xi $.

The abstract Hamiltonian minimum (maximum) principle (AHMP) \cite{Roub}
provides a necessary condition for the optimal solution of minimising (A)
with (B) and (C), which is composed of the points in the domain of
definition of $\lambda$, namely, $\Omega $, that minimize the function $%
h(t,\xi)$ in (\ref{h}), at each time, $t$, which is named \emph{Hamiltonian
integrand}. This principle tells us that the optimal solution should pick
values of the minimum of $h(t,\xi)$ with respect to $\xi $, for each $t$.

If the control signal is supra-Poisson or Poisson, namely the dispersion
index $\alpha \geq 0.5$, for each $t\in \lbrack 0,T+R]$, the Hamiltonian
integrand $h(t,\xi)$ is convex (or semi-convex) with respect to $\xi$ and so
has a unique minimum point with respect to each $\xi_{i}$. So, the optimal
solution is a deterministic \emph{function} of time: for each $t_{0}$, $%
\lambda _{i}(t_{0})$ can be regarded as picking value at the minimum point
of $h(t,\xi)$ for $t=t_{0}$.

When $\alpha <1/2$, namely when the control signal is sub-Poisson, it
follows that $h(t,\xi )$ is no longer a convex function. Figs. \ref{al} show
the possible minimum points of the term $g_{i}(t)|\xi _{i}|^{2\alpha
}-f_{i}(t)\xi _{i}$ with $g_{i}(t)>0$ and $f_{i}(t)>0$. From the assumption
that the range of each $\xi _{i}$ is bounded, namely $-M_{Y}\leq \xi
_{i}\leq M_{Y}$, it then directly follows, from the form of $h(t,\xi )$,
that the value of $\xi _{i}$ which optimises $h(t,\xi )$ is not unique;
there are \textit{three} possible minimum values: $-M_{Y}$, $0$, and $M_{Y}$%
, as shown in Table \ref{maximum points}. So, no explicit function $\lambda
(t)$ exists which is the optimal solution of the optimisation problem
(A)-(C). However, an infinimum of (\ref{h}) does exist.

Proceeding intuitively, we first make an arbitrary choice of one of the
three optimal values for $\xi_{i}$ (namely one of $-M_{Y}$, $0$, and $M_{Y}$%
) and then \textit{average} over all possible choices at each time. With $%
\eta_{t,i}(\xi)$ the probability density of $\xi_{i}$ at time $t$, the
average is carried out using the distribution (probability density
functional) $\eta[\lambda]\propto{\prod\nolimits_{t,i}}\eta_{t,i}(\xi)$
which represents independent choices of the control signal at each time.
Thus, for example, the functional $H[\lambda]$ becomes \textit{functionally
averaged} over $\lambda(\cdot)$ according to {$\int_{T}^{T+R}\left( \int
h(t,\lambda )\eta[\lambda]d[\lambda]\right) dt$. The optimisation problem
has thus shifted from determining a function (as required when $\alpha\geq
1/2$) to determining a \textit{probability density functional}, $\eta[\lambda%
]$. This intuitively motivated procedure is confirmed by optimisation
theory- and this leads us to Young measure theory. }

Let us spell it out in a mathematical way. Young measure theory \cite%
{Young,Young1} provides a solution to an optimization problem where a
solution, which was a function, becomes a linear functional of a
parameterized measure. By way of explanation, a function, $\lambda (t)$,
yields a single value for each $t$, but a parameterized measure $\{\eta
_{t}(\cdot )\}$ yields a \textit{set of values} on which a measure (i.e., a
weighting) $\eta _{t}(\cdot )$ is defined for each $t$. A \textit{functional}
with respect to a parameterized measure can be treated in a similar way to a
solution that is an explicit function, by averaging over the set of values
of the parameterized measure at each $t$. In detail, a functional of the
form $H[\lambda ]=\int_{0}^{T}h(t,\lambda (t))dt$, of an explicit function, $%
\lambda (t)$, can have its definition extended to a parameterized measure $%
\eta _{t}(\cdot )$, namely $H[\eta ]=\int_{0}^{T}\int_{\Omega }h(t,\xi )\eta
_{t}(d\xi )dt$. In this sense, an explicit function can be regarded as a
special solution that is a `parameterized concentrated measure' (i.e.,
involving a Dirac-delta function) in that we can write $H[\lambda
]=\int_{0}^{T}\int_{\Omega }h(t,\xi )\delta (\xi -\lambda (t))d\xi dt$.
Thus, we can make the equivalence between the explicit function $\lambda (t)$
and a parameterized concentrated measure $\{\delta (\xi -\lambda (t))\}_{t}$
and then replace this concentrated measure, when appropriate, by a Young
measure.

Technically, a Young measure is a class of parameterized measures that are
relatively weak*-compact such that the Lebesgue function space can be
regarded as its dense subset in the way mentioned above. Thus, by enlarging
the solution space from the function space to the (larger) Young measure
space, we can find a solution in the larger space and the minimum value of
the optimisation problem, in the Young measure space, coincides with the
infinimum in the Lebesgue function space.

For any function $r(x,t,\xi)$, we denote a symbol $\cdot$ as the inner
product of $r(x,t,\xi)$ over the parameterized measure $\eta_{t}(d\xi)$, by
averaging $r(x,t,\xi)$ with respect to $\xi$ via $\eta_{t}(\cdot)$. That is
we define $r(x,t,\xi)\cdot\eta_{t}$ to represent $\int_{\Omega}r(x,t,\xi)%
\eta_{t}(d\xi )$. In this way we can rewrite the optimisation problem
(A)-(C) as:
\begin{equation}
\left\{
\begin{array}{ll}
\min_{\eta } & \int_{T}^{T+R}\int_{\Xi }\Vert \phi (x,t)-z(t)\Vert
^{2}p(x,t)dxdt \\
\mathrm{subject~to} & \frac{\partial p(x,t)}{\partial t}=[(\mathcal{L}\cdot
\eta )\circ p](x,t),~on~[0,T]\times \Xi ,~p(x,0)=p_{0}(x), \\
& x\in \Xi,~t\in[0,T+R] \\
& \int_{\Xi }\phi (x,t)p(x,t)dx=z(t),~on~[T,T+R],~\eta \in \mathcal{Y}.%
\end{array}%
\right.  \label{ROP}
\end{equation}%
Here, $\mathcal{Y}$ denotes the Young measure space, which is defined on the
state space $\Xi$ with $t\in[0,T+R]$, while $\eta =\{\eta _{t}(\cdot )\}$
denotes a shorthand for the Young measure associated with control; $(%
\mathcal{L}\cdot \eta )\circ p$ is defined as
\begin{eqnarray*}
&&[\mathcal{L}\cdot \eta ]\circ p=\mathcal{\int }_{\Omega}\mathcal{L}%
(t,x,\xi )\circ p(x,t)\eta _{t}(d\xi ) \\
&=&\int_{\Omega }\bigg\{-\sum_{i=1}^{n}\frac{\partial \lbrack A_{i}(x,t,\xi
)p(x,t)]}{\partial x_{i}} \\
&&+\frac{1}{2}\sum_{i,j=1}^{n}\frac{\partial ^{2}\{[B(x,t,\xi )B^{\top
}(x,t,\xi )]_{ij}p(x,t)\}}{\partial x_{i}\partial x_{j}}\bigg\}\eta
_{t}(d\xi ) \\
&=&\int_{\Omega}\bigg\{-\sum_{i=1}^{n}\frac{\partial\lbrack
a_{i}(x,t)+\sum_{j=1}^{m}b_{ij}(x,t)\xi_{j})p(x,t)]}{\partial x_{i}} \\
&& +\frac{1}{2}\sum_{i,j=1}^{n}\frac{\partial^{2}\{\sum_{k=1}^{m}\kappa
_{k}^{2}b_{ik}(x,t)b_{jk}(x,t)|\xi_{k}|^{2\alpha}p(x,t)\}}{\partial
x_{i}\partial x_{j}}\bigg\}\eta_{t}(d\xi).
\end{eqnarray*}%
So, we can study the relaxation problem (\ref{ROP}) instead of the original
one, (A)-(C). We assume that the constraints in (\ref{ROP}) admit a nonempty
set of $\lambda(t)$, which guarantees that the problem (\ref{ROP}) has a
solution. We also assume the existence and uniqueness of the Cauchy problem
of the Fokker-Plank equation (\ref{FPE}).

The abstract Hamiltonian minimum (maximum) principle (Theorem 4.1.17 \cite%
{Roub}) also provides a similar necessary condition for the Young measure
solution of (\ref{ROP}), if it admits a solution, that is composed of the
points in $\Omega $ which minimise the integrand of the underlying `abstract
Hamiltonian'. By employing variational calculus with respect to the Young
measure, we can derive the form (\ref{h}), for the Hamiltonian integrand.
See Appendix A for details.

Via this principle, the problem conceptively reduces to finding the minimum
points of $h(t,\xi )$. From Table \ref{maximum points}, for a sufficiently
large $M_{Y} $, it can be seen that, if $\alpha <0.5$, then the minimum
points for each $t$ with $g_{i}(t)>0$ may be TWO points $\{0,M_{Y}\}$ or $%
\{-M_{Y},0\}$. Hence, in the case of $\alpha <0.5$, the optimal solution of (%
\ref{ROP}) is a measure on $\{M_{Y},0\}$ or $\{-M_{Y},0\}$. This implies
that the optimal solution of (\ref{ROP}) should have the following form $%
\eta _{t}(\cdot )=\eta _{1,t}(\cdot )\times ,\cdots ,\eta _{m,t}(\cdot )$,
where $\times $ stands for the Cartesian product, and each $\eta _{i,t}$ we
adopt is a measure on $\{-M_{Y},0,M_{Y}\}$:
\begin{equation}
\eta _{i,t}(\cdot )=\mu _{i}(t)\delta _{M_{Y}}(\cdot )+\nu _{i}(t)\delta
_{-M_{Y}}(\cdot )+[1-\mu _{i}(t)-\nu _{i}(t)]\delta _{0}(\cdot )
\label{functional density}
\end{equation}%
where $\mu _{i}(t)$ and $\nu _{i}(t)$ are non-negative weight functions. The
optimisation problem corresponds to the determination of the $\mu_{i}(t)$
and $\nu _{i}(t)$. Averaging with respect to $\eta $ corresponds to the
optimal control signal when the noise is sub-Poisson ($\alpha <0.5$). This
assignment of a probability density for the solution at each time is known
in the mathematical literature as a Young Measure \cite{Roub,Young,Young1}.
For all $i$ and $t$, the weight functions satisfy: (i) $\mu _{i}(t)+\nu
_{i}(t)\leq 1$ and (ii) $\mu_{i}(t)\nu_{i}(t)=0$ (owing to the properties
mentioned above that $h(t,\xi _{i})$ cannot simultaneously have both $M_{Y}$
and $-M_{Y}$ as optimal).

Consider the simple one-dimensional system (\ref{linear}). We shall provide
the explicit form of the optimal control signal $u(t)$ as a Young measure.
Taking expectation for both sides in (\ref{linear}), we have
\[
\frac{d\mathbf{E}~(x)}{dt}=p\mathbf{E}(x)+q\lambda (t).
\]%
Since we only minimise the variance in $[T,T+R]$ for some $T>0$ and $R>0$,
the control signal $u(t)$ for $t\in \lbrack 0,T)$ is picked so that the
expectation of $x(t)$ can reach $z_{0}$ at the time $t=T$. After some simple
calculations, we find a deterministic $\lambda (t)$ as follows:
\[
\lambda (t)=\frac{z_{0}-x_{0}\exp (pT)}{Tq}\exp (p(-T+t)),~t\in \lbrack 0,T]
\]%
such that $\mathbf{E}(x(T))=z_{0}$. Then we pick $\lambda (t)=-pz_{0}/q$ for
$t\in \lbrack T,T+R]$ such that $d\mathbf{E}(x(t))/dt=0$ for all $t\in
\lbrack T,T+R]$. Hence, $\mathbf{E}(x(t))=z_{0}$ for all $t\in \lbrack
T,T+R] $. In the interval $[T,T+R]$, as discussed above, for a sufficiently
large $M_{Y}$, the optimal solution of $\lambda (t)$ should be a Young
measure that picks values in $\{0,M_{Y},-M_{Y}\}$. To sum up, we can
construct the optimal $\lambda (t)$ as follows:
\[
\eta _{t}(\cdot )=\left\{
\begin{array}{ll}
\delta _{\lambda (t)}(\cdot ) & t\in \lbrack 0,T) \\
\delta _{M_{Y}}(\cdot )\frac{-pz_{0}}{q~M_{Y}}+\delta _{0}(\cdot )[1+\frac{%
pz_{0}}{q~M_{Y}}] & t\in \lbrack T,T+R]~\mathrm{if}~-pz_{0}/q>0~\mathrm{or}
\\
\delta _{-M_{Y}}(\cdot )\frac{pz_{0}}{q~M_{Y}}+\delta _{0}(\cdot )[1-\frac{%
pz_{0}}{q~M_{Y}}] & t\in \lbrack T,T+R]~\mathrm{if}~pz_{0}/q\geq 0.%
\end{array}%
\right.
\]%
It can be seen that in $[0,T)$, $\eta _{t}(\cdot )$ is in fact a
deterministic function as the same as $\lambda (t)$.

\section{Precise Control Performance}

We now illustrate the control performance when the noise is sub-Poisson. For
the general nonlinear system (\ref{general}), we cannot obtain an explicit
expression for the probability density functional $\eta[\lambda]$, Eq. (\ref%
{functional density}), or the value of the variance (execution error).
However, we can adopt a \textit{non optimal} probability density functional
which illustrates the property of the exact system, that the execution error
becomes arbitrarily small when the bound of the control signal, $M_{Y}$,
becomes arbitrarily large. In the simple case (\ref{linear}), we note that
if there is a $\hat {u}(t)$, such that $\mathbf{E}(x(t))=z(t) $, then the
variance becomes, expressed by Young measure $\hat{\eta}(\cdot)$,
\begin{eqnarray*}
{\mathrm{v}ar}(x(t)) &
=\int_{0}^{t}\int_{-M_{Y}}^{M_{Y}}\exp(2p(t-s))q^{2}|\xi|^{2\alpha}\hat{\eta}%
(d\xi)ds \\
& =\int_{0}^{t}\exp(2p(t-s))q^{2}M_{Y}^{2\alpha}\frac{|\hat{u}(s)|}{M_{Y}}ds,
\end{eqnarray*}
which converges to zero as $M_{Y}\rightarrow\infty$, due to $\alpha<0.5$.
That is, the minimised execution error can be arbitrarily small if the bound
of the control signal, $M_{Y}$, goes sufficiently large.

In fact, this phenomenon holds for general cases. The non optimal
probability density functional is motivated by assuming that there is a
deterministic control signal $\hat{u}(t)$ which controls the dynamical
system
\begin{equation}
\frac{d\hat{x}}{dt}=A(\hat{x},t,\hat{u}(t))  \label{ds}
\end{equation}
which is the original system (\ref{general}), \textit{with the noise removed}%
. The deterministic control signal $\hat{u}(t)$ causes $\hat{x}(t)$ to
precisely achieve the target trajectory $\hat{x}(t)=z(t)$ for $T\leq t\leq
T+R$.

Then, we add the noise with the signal-dependent variance: $\sigma
_{i}=\kappa _{i}|\lambda _{i}|^{\alpha }$ with some $\alpha <0.5$, which
leads a stochastic differential equation, $dx=A(x,t,\lambda
(t))dt+B(x,t,\lambda (t))dW_{t}$. The non optimal probability density that
is appropriate for time $t$, namely $\hat{\eta}_{t,i}(\xi )$, is constructed
to have a mean over the control values $\{-M_{Y},0,M_{Y}\}$, which equals $%
\hat{u}(t)$. This probability density is
\begin{equation}
\hat{\eta}_{i,t}(\lambda _{i})=\frac{|\hat{u}_{i}(t)|}{M_{Y}}\delta _{\sigma
(t)M_{Y}}(\lambda _{i})+(1-\frac{|\hat{u}_{i}(t)|}{M_{Y}})\delta
_{0}(\lambda _{i})  \label{functional density1}
\end{equation}%
where $\sigma (t)=\mathrm{sign}(\hat{u}_{i}(t))$ and, by definition, $%
\hat{u}_{i}(t)=\int_{-M_{Y}}^{M_{Y}}\lambda _{i}\hat{\eta}_{i,t}(\lambda
_{i})d\lambda _{i}$. We establish in Appendix B that the expectation
condition ((B) above) holds asymptotically when $M_{Y}\rightarrow \infty ,$
which shows that the non optimal probability density functional is
appropriately `close' to the optimal functional. The accumulated execution
error associated with the non optimal functional is estimated as
\begin{equation}
\min_{\eta }\sqrt{\int_{T}^{T+R}\mathrm{var}\big[x(t))\big]dt}=O\bigg(\frac{1%
}{M_{Y}^{1/2-\alpha }}\bigg)  \label{error_est}
\end{equation}%
and, in this way, optimal performance of control, with sub-Poisson noise,
can be seen to become precise as $M_{Y}$ is made large. By contrast, if $%
\alpha \geq 0.5$, the accumulated execution error is always greater than
some positive constant.

To gain an intuitive understanding of why the effects of noise are
eliminated for $\alpha<0.5$ we discretise the time $t$ into small bins of
identical size $\Delta t$. Using the `noiseless control' $\hat{u}_{i}(t)$,
we divide the time bin $\left[ t,t+\Delta t\right] $ into two complementary
intervals: $\left[ t,t+|\hat{u}(t)|\Delta t/M_{Y}\right] $ and $\left[ t+|%
\hat{u}(t)|\Delta t/M_{Y},t+\Delta t\right] $, and assign $%
\lambda_{i}=\sigma(t)M_{Y}$ for the first interval and $\lambda_{i}=0$ for
the second. When $\Delta t\rightarrow0$ the effect of the control signal $%
\lambda_{i}(t)$ on the system approaches that of $\hat{u}_{i}(t)$, although $%
\lambda_{i}(t)$ and $\hat{u}_{i}(t)$ are quite different. The variance of
the noise in the first interval is $\kappa _{i}M_{Y}^{2\alpha}$ and is $0$
in the second. Hence, the overall noise effect of the bin is $\sigma_{i}^{2}=%
\frac{\kappa_{i}|\hat{u}_{i}(t)|}{M_{Y}}\cdot M_{Y}^{2\alpha}=\kappa_{i}|%
\hat{u}_{i}(t)|M_{Y}^{2\alpha-1}$. Remarkably, this tends to zero as $%
M_{Y}\rightarrow\infty$ \textit{if} $\alpha<1/2$ (i.e., for sub-Poisson
noise). The discretisation presented may be regarded as a formal stochastic
realisation of the probability density functional (Young measure) adopted.
The interpretation above can be verified in a rigorous mathematical way. See
Appendix B for details.

\section{Application and Example}

Let us now consider an application of this work: the control of
straight-trajectory arm movement, which has been widely studied \cite%
{Win,Sim,Tan,Ike} and applied to robotic control. The dynamics of such
structures are often formalised in terms of coordinate transformations.
Nonlinearity arises from the geometry of the joints. The change in spatial
location of the hand that results from bending the elbow depends not only on
the amplitude of the elbow movement, but also on the state of the shoulder
joint.

For simplicity, we ignore gravity and viscous forces, and only consider the
movement of a hand on a horizontal plane in the absence of friction. Let $%
\theta _{1}$ denote the angle between the upper arm and horizontal
direction, and $\theta _{2}$ be the angle between the forearm and upper arm
(Fig. \ref{arms}). The relation between the position of hand $[x_{1},x_{2}]$
and the angles $[\theta _{1},\theta _{2}]$ is
\begin{eqnarray*}
\theta _{1} &=&\arctan (x_{2}/x_{1})-\arctan (l_{2}\sin \theta
_{2}/(l_{1}+l_{2}\cos \theta _{2})) \\
\theta _{2} &=&\arccos
[(x_{1}^{2}+x_{2}^{2}-l_{1}^{2}-l_{2}^{2})/(2l_{1}l_{2})],
\end{eqnarray*}%
where $l_{1,2}$ are moments of inertia with respect to the center of
mass, for the upper arm and forearm. When moving a hand between two
points, a human maneuvers their arm so as to make the hand move in roughly a
straight line between the end points. We use this to motivate the model by
applying geostatics theory \cite{Win}. This implies that the arm satisfies
an Euler-Lagrange equation, which can be described as the following
nonlinear two-dimensional system of differential equations:%
\begin{eqnarray}
&&N(\theta _{1},\theta _{2})\left[
\begin{array}{c}
\ddot{\theta}_{1} \\
\ddot{\theta _{2}}%
\end{array}%
\right] +C(\theta _{1},\theta _{2},\dot{\theta}_{1},\dot{\theta}_{2})\left[
\begin{array}{c}
\dot{\theta}_{1} \\
\dot{\theta}_{2}%
\end{array}%
\right] =\gamma _{0}\left[
\begin{array}{c}
Q_{1} \\
Q_{2}%
\end{array}%
\right] ,  \nonumber \\
&&\theta _{1}(0)=-\frac{\pi }{2},~\theta _{2}(0)=\frac{\pi }{2},~\dot{\theta}%
_{1}(0)=\dot{\theta}_{2}(0)=0.  \label{AM}
\end{eqnarray}%
In these equations
\begin{eqnarray*}
N &=&\left[
\begin{array}{ll}
\begin{array}{l}
I_{1}+m_{1}r_{1}^{2}+m_{2}l_{1}^{2} \\
+I_{2}+m_{2}r_{2}^{2}+2k\cos \theta _{2}%
\end{array}
& I_{2}+m_{2}r_{2}^{2}+k\cos \theta _{2} \\
I_{2}+m_{2}r_{2}^{2}+k\cos \theta _{2} & I_{2}+m_{2}r_{2}^{2}%
\end{array}%
\right] , \\
C &=&k\sin \theta _{2}\left[
\begin{array}{ll}
\dot{\theta}_{2} & \dot{\theta}_{1}+\dot{\theta}_{2} \\
\dot{\theta}_{1} & 0%
\end{array}%
\right] ,~Q_{i}=\lambda _{i}(t)+\kappa _{0}|\lambda _{i}(t)|^{\alpha }\frac{%
dW_{i}}{dt},
\end{eqnarray*}%
where {$m_{i}$, $l_{i}$, and $I_{i}$ are, respectively the mass, length, and
moment of inertia with respect to the center of mass for the $i$'th part of
the system and $i=1$ ($i=2$) denotes the upper arm (forearm), $r_{1,2}$ are
the lengths of the upper- and fore-arms, and $\gamma _{0}$ is the scale
parameter of the force. Additionally, $k=m_{2}l_{1}r_{2}$, while } $\lambda
_{1,2}(t)$ are the \textit{means} of two torques $Q_{1,2}(t)$, which are
motor commands to the joints. The torques are accompanied by
signal-dependent noises. All other quantities are fixed parameters. See \cite%
{Win} for the full details of the model. The values of the parameters we
pick here are listed in Table \ref{parameters}.

For this example, we shall aim to control the hand such that it starts at $t=0$,
with the initial condition of (\ref{AM}), reaches the target at coordinates $%
H=[H_{1},H_{2}]$ at time $t=T$, and then stays at this target for a time
interval of $R$. We use the minimum variance principle to determine the
optimal task, which is more advantageous than other optimisation criteria to
control a robot arm \cite{Win,Ike}. Let $[x_{1}(t),x_{2}(t)]$ be the
Cartesian coordinates of the hand that follow from the angles $[\theta
_{1}(t),\theta_{2}(t)]$. The minimum variance principle determines $%
\min_{\lambda_{1},\lambda_{2}}\int_{T}^{T+R}[\mathrm{var}(x_{1}(t))+\mathrm{%
var}(x_{2}(t))]dt$, subject to the constraint that $\mathbf{E}%
[x_{1}(t),x_{2}(t)]=[H_{1},H_{2}] $ for $T\leq t\leq T+R$, with $%
-M_{Y}\leq\lambda_{i}\leq M_{Y}$. Despite not being in possession of an
explicit analytic solution, we can conclude that if $\alpha\geq0.5$, the
optimisation problem results from the unique minimum to the Hamiltonian
integrand and hence yields $\lambda_{1}(t)$ and $\lambda _{2}(t)$ which are
\textit{ordinary functions}. However, if $\alpha<0.5$, the optimal solution
of the optimisation problem follows from a probability density functional
analogous to Eq. (\ref{functional density}) (i.e., a Young measure over $%
\lambda_{i}\in\{-M_{Y},0,M_{Y}\}$). Thus, we can relax the optimisation
problem via Young measure as follows:
\begin{eqnarray*}
Q_{i}=\int_{-M_{Y}}^{M_{Y}} \bigg(\xi_{i}+\kappa_{0}|\xi_{i}(t)|^{%
\alpha}dW_{i}/dt\bigg)\cdot\eta_{i,t}(d\xi),~i=1,2,
\end{eqnarray*}
and
\begin{eqnarray}
\left\{%
\begin{array}{ll}
\min_{\eta_{1,2}(\cdot)} & \int_{T}^{T+R} [\mathrm{var}(x_{1}(t))^{2}+%
\mathrm{var}[x({2}(t))^{2}]dt \\
\mathrm{Subject~to} & \mathbf{E}[x_{1}(T),x_{2}(T)]=[H_{1},H_{2}],~t\in[T,T+R%
] \\
& \xi_{i}\in[-M_{Y},M_{Y}].%
\end{array}%
\right.  \label{RAMOP}
\end{eqnarray}

We used Euler's method to conduct numerical computations, with a time step
of $0.01$ msec in (\ref{AM}). This yields a dynamic programming problem (see
Methods). Fig. \ref{mean} shows the means of the optimal control signals $%
\bar{\lambda}_{1,2}(t)$ with $\alpha =0.25$ and $M_{Y}=20000$:
\[
\bar{\lambda}_{i}(t)=\int_{-M_{Y}}^{M_{Y}}\xi \eta _{i,t}(d\xi ),~i=1,2.
\]%
According to the form of the optimal Young measure, the optimal solution
should be
\[
\eta _{i,t}(ds)=\left\{
\begin{array}{ll}
\bigg[\frac{\bar{\lambda}_{i}(t)}{M_{Y}}\delta_{M_{Y}} (s)+(1-\frac{\bar{%
\lambda}_{i}(t)}{M_{Y}})\delta_{0} (s)\bigg]ds & \bar{\lambda}_{i}(t)>0 \\
\bigg[\frac{|\bar{\lambda}_{i}(t)|}{M_{Y}}\delta_{-M_{Y}} (s)+(1-\frac{|\bar{%
\lambda}_{i}(t)|}{M_{Y}})\delta_{0} (s)\bigg]ds & \bar{\lambda}_{i}(t)<0 \\
\delta_{0} (s)ds & \mathrm{otherwise}.%
\end{array}%
\right.
\]

It can be shown (derivation not given in this work) that in the absence of
the noise term, the arm can be accurately controlled to reach a given target
for any $T>0$. In this case, Fig. \ref{desire} shows the dynamics of the
angles, their velocities, and accelerators, in the controlled system,
removed noise. See, in comparison, the dynamical system with noise, whose
dynamics of the angles, velocities, and accelerations are illustrated in
Fig. \ref{fact}, and its dynamics are exactly the same as those in the case
with noise removed. However, the acceleration dynamics of a noisy dynamic
system appear discontinuous since the control signals, that have noises and
are added to the right-hand sides of the mechanical equations, are
discontinuous (noisy) in a numerical realisation. However, according to the
theory of stochastic differential equations \cite{SDE}, (\ref{AM}) has
continuous solution. Hence, these discontinuous acceleration dynamics lead
very smooth dynamics of velocities and angles, as shown in Fig. \ref{fact}.

Figs. \ref{figs1} (a) and (b) illustrate that the probability density
functional, for this problem, contains optimal control signals that are
similar to neural pulses. Despite the optimal solution not being an ordinary
function when $\alpha <0.5$, the trajectories of the angles $\theta _{1}$
and $\theta _{2}$ of the arm appear quite smooth, as shown in Fig. \ref{fact}
(a), and the target is reached very precisely if the value of $M_{Y}$ is
large. By comparison, when $\alpha >0.5$ the outcome has a standard
deviation between $4$ to $6$ cm, which may lead to a failure to reach
the target. A direct comparison between the execution error of the cases $%
\alpha =0.8(>0.5)$ and $\alpha =0.25(<0.5)$ is shown in the supplementary
movies (supplementary videos `Video S1' and `Video S2') of arm movements of
both cases. Our conclusion is that a Young measure optimal solution, in the
case of sub-Poisson control signals, can realize a precise control
performance even in the presence of noise. However, Poisson or Supra-Poisson
control signals cannot realise a precise control performance, despite the
existence of an explicit optimal solution in this case. Thus $\alpha <0.5$
significantly reduces execution error compared with $\alpha \geq 0.5$.

With different $T$ (the starting time of reaching the target) and $R$ (the
duration of reaching the target), under sub-Poisson noise, i.e., $\alpha <0.5
$, the system can be precisely controlled by optimal Young measure signals
with a sufficiently large $M_{Y}$. Since the target in the reachable region
of the arm, it implies that  the original differential system
of (\ref{AM}) with the noise removed can be controlled for any $T>0$ and $%
R>0$ \cite{Win,Sim}. According to the discussion in Appendix B (Theorem \ref%
{thm2}), the execution error can be arbitrarily small when $M_{Y}$ is
sufficiently large. However, for a smaller $T$, i.e., the more rapid the
control is, the larger means of the control signals will be. As for the
duration $R$, by picking the control signals as fixed values (zeros in this
example) such that the velocities keep zeros, the arm will stay at the
target for arbitrarily long or short. Similarly, with a large $M_{Y}$, the
error (variance) of staying at the target can be very small. To illustrate
these arguments, we take $T=100$ (msec) and $R=100$ (msec) for example (all
other parameters are the same as above). Fig. \ref{meanv1} shows that the
means of the optimal Young measure control signals before reaching the
target have larger amplitudes than those when $T=650$ (msec) and Fig. \ref%
{factv1} shows that the arm can be precisely controlled to reach and stay at
the target.

The movement error depends strongly on the value of the dispersion index, $%
\alpha$, and the bound of the control signal, $M_{Y}$. Fig. \ref{figs2}
indicates a quantitative difference in the execution error between the two
cases $\alpha<0.5$ and $\alpha\geq 0.5$, if $\alpha$ is close to (but less
than) $0.5$. The execution error can be appreciable unless a large $M_{Y}$
is used. For example if $\alpha=0.45$, as in Fig. \ref{figs2}, the square
root of the execution error is approximately $0.6$ cm when $M_{Y}=20000$.
From (\ref{error_est}), the error decreases as $M_{Y}$ increases, behaving
approximately as a power-law, as illustrated in the inner plot of Fig. \ref%
{figs2}. The logarithm of the square root of the execution error is found to
depend approximately linearly on the logarithm of $M_{Y}$ when $\alpha=0.25$%
, with a slope close to $-0.25$, in good agreement with the theoretical
estimate (\ref{error_est}).

We note that in a biological context, a set of neuronal pulse trains can
achieve precise control in the presence of noise. This could be a natural
way to approximately implement the probability density functional when $%
\alpha<0.5$. All other parameters are the same as above ($\alpha=0.25$). The
firing rates are illustrated in Fig. \ref{figs1} (a) and (b) and broadly
coincide with the probability density functional we have discussed. In
particular, at each time $t$, the probability $\eta_{i,t}$ can be
approximated by the fraction of the neurons that are firing, with the mean
firing rates equal the means of the control signals (see Methods). The
approximations of the components of the noisy control signals are shown in
Figs. \ref{figs3} (a) and (b) respectively. Fig.\ref{figs3} (c) and (d)
illustrate such an implementation of the optimal solution by neuronal pulse
trains. Using the pulse trains as control signals, we can realise precise
movement control. We enclose two videos `movieUP.avi' and 'movieDOWN.avi' to
demonstrate the efficiency of the control by pulse trains with two different
targets. As they show, the targets are precisely accessed by the arm. We
point out that the larger the ensemble is, the more precise the control
performance will be, because a large number of the neurons in an ensemble
can theoretically lead to a large $M_{Y}$ as we mentioned above, which
results in an improvement of the approximation of a Young measure and
decreases the execution error as stated in (\ref{error_est}).

We note that these kinds of patterns of pulse trains have been widely
reported in experiments, for example, the synchronous neural bursting
reported in \cite{Ross1}. This may provide a mathematical rationale for the
nervous system to adopt pulse-like signals to realise motor control.

\section{Conclusions}

In this paper, we have provided a general mathematical framework for
controlling a class of stochastic dynamical systems with random control
signals whose noisy variance can be regarded as a function of the signal
magnitude. If the dispersion index, $\alpha$, is $<0.5$, which is the case
when the control signal is sub-Poisson, an optimal solution of explicit
function does not exist but has to be replaced by a Young measure solution.
This parameterized measure can lead a precise control performance, where the
controlling error can become arbitrarily small. We have illustrated this
theoretical result via a widely-studied problem of arm movement control.

In the control problem of biological and robotic systems, large control
signals are needed for rapid movement control \cite{Ric}. When noise occurs,
this will cause imprecision in the control performance. As pointed out in
\cite{Sim,Tod,Kit,Mus,Chh}, a trade-off should be considered when conducting
rapid control with noises. In this paper, we still use a "large" control
signal but with different contexts. With sub-Poisson noises, we proved that
a sufficiently large $M_{Y}$, i.e., a sufficiently large region of the
control signal values, can lead precise control performance. Hence, a large
region of control signal values plays a crucial role in realising precise
control in noisy environments, for both "slow" and "rapid" movement control.
In numerical examples, the larger $M_{Y}$ we pick, the smaller control error
will be, as shown in the inset plot of Fig. \ref{figs2} as well as (\ref%
{error_est}) (Theorem \ref{thm2} in Appendix B).

Implementation of the Young measure approach in biological control appears
to be a natural way to achieve precise execution error in the presence of
sub-Poisson noise. In particular, in the neural-motor control example
illustrated above, the optimal solution in the case of $\alpha <0.5$ is
quite interesting. Assume we have an ensemble of neurons which fire pulses
synchronously within a sequence of non overlapping time windows, as depicted
in Figs. \ref{figs1} C and D. We see that the firing neurons yield control
signals which are very close, in form, to the type of Young measure
solution. This conclusion may provide a mathematical rationale for the
nervous system why to adopt pulse-like trains to realise motor control.
Additionally, we point out that, our approach may have significant
ramifications in other fields, including robot motor control and sparse functional estimation, which are issues of our future research.





\section*{Methods}

\textbf{Numerical methods for the optimisation solution.} We used Euler's
method to conduct numerical computations, with a time step of $\Delta t=0.01$
msec in (\ref{AM}) with $\alpha<0.5$. This yields a dynamic programming
problem. First, we divide the time domain $[0,T]$ into small time bins with
a small size $\Delta t$. Then, we regard the process $\eta_{i,t}$ in each
time bin $[n\Delta t,(n+1)\Delta t]$ as a static measure variable. Thus, the
solution reduces to finding two series of nonnegative parameters $\mu_{i,n}$
and $\nu_{i,n}$ with $\mu_{i,n}+\nu_{i,n}\leq1$ and $\mu_{i,n}\nu_{i,n}=0$
such that
\[
\lambda_{i}(t)=\left\{
\begin{array}{ll}
M_{Y} & n\Delta t\leq t<(n+\mu_{i,n})\Delta t \\
-M_{Y} & (n+\mu_{i,n})\Delta t\leq t<(n+\mu_{i,n}+\nu_{i,n})\Delta t \\
0 & (n+\mu_{i,n}+\nu_{i,n})\Delta t\leq t<(n+1)\Delta t.%
\end{array}
\right. .
\]
The approximate solution of the optimisation problem requires nonnegative $%
\mu_{i,n}$ and $\nu_{i,n}$ that minimise the final movement errors. We thus
have a dynamic programming problem. We should point out that in the
literature, a similar method was proposed to solve the optimisation problem
in a discrete system with control signals taking only two values \cite%
{Ike,Aih,Aih1}. Thus, the dynamical system (\ref{general}) becomes the
following difference equations via the Euler method:
\begin{eqnarray*}
&&x_{i}(k+1)=x_{i}(k)+\Delta t\bigg\{a_{i}(x(k),t_{k})+%
\sum_{j=1}^{m}b_{ij}(x(k),t_{k})[\mu _{j,k}-\nu _{j,k}]M_{Y}\bigg\} \\
&&+\sum_{j=1}^{m}b_{ij}(x(k),t_{k})\kappa _{j}\sqrt{\Delta t}(\mu _{j,k}+\nu
_{j,k})M_{Y}^{\alpha }\nu _{j},~k=0,1,2,\cdots ,
\end{eqnarray*}%
where $\nu _{j}$, $j=1,\ldots ,m$, are independent standard Gaussian random
variables. We can derive difference equations for the expectations and
variances of $x(k)$, by ignoring the higher order terms with respect to $%
\Delta t$:
\begin{eqnarray*}
&&\mathbf{E}(x_{i}(k+1))=\mathbf{E}(x_{i}(k))+\Delta t\bigg\{\mathbf{E}%
[a(x(k),t_{k})] \\
&&+\sum_{j=1}^{m}\mathbf{E}[b_{ij}(x(k),t_{k})][\mu _{j,k}-\nu _{j,k}]M_{Y}%
\bigg\} \\
&&\mathrm{cov}(x_{i}(k+1),x_{i^{\prime }}(k+1))=\mathrm{cov}%
(x_{i}(k),x_{i^{\prime }}(k))+\Delta t\mathrm{cov}\bigg(x_{i^{\prime }}(k),%
\big\{a_{i}(x(k),t_{k}) \\
&&+\sum_{j=1}^{m}b_{ij}(x(k),t_{k})[\mu _{j,k}-\nu _{j,k}]M_{Y}\big\}\bigg)
\\
&&+\Delta t\mathrm{cov}\bigg(x_{i}(k),\big\{a_{i^{\prime
}}(x(k),t_{k})+\sum_{j=1}^{m}b_{i^{\prime }j}(x(k),t_{k})[\mu _{j,k}-\nu
_{j,k}]M_{Y}\big\}\bigg) \\
&&+\Delta t\sum_{j=1}^{m}\mathrm{cov}(b_{i^{\prime
}j}(x(k),t_{k}),b_{ij}(x(k),t_{k}))(\mu _{j,k}+\nu _{j,k})M_{Y}^{2\alpha }.
\end{eqnarray*}%
Thus, Eq. (\ref{ROP}) becomes the following discrete optimization problem:
\begin{eqnarray}
\left\{%
\begin{array}{ll}
\min_{\mu _{i,k},\nu _{i,k}} & \sum_{k}\mathrm{var}(\phi (x(k),t_{k})) \\
\mathrm{subject~to~} & \mathbf{E}(\phi
(x(k),t_{k}))=z(t_{k}),~\mu_{i,k}+\nu_{i,k}\le 1, \\
& \mu_{i,k}\ge0,~\nu_{i,k}\ge0%
\end{array}%
\right.  \label{dis}
\end{eqnarray}%
with $x(k)$ a Gaussian random vector with expectation $\mathbf{E}(x(k))$ and
covariance matrix $cov(x(k),x(k))$.

\textbf{Neuronal pulse trains approximating Young measure solution.} At each
time $t $, the measure $\eta_{t}^{\ast}$ can be approximated by the fraction
of the neuron ensemble that are firing. In detail, assuming that the means
of the optimal control signals are $R_{t,i}(\xi)$, $i=1,2$, and there are
one ensemble of excitatory neurons and another ensemble of inhibitory
neurons. A fraction of the neurons fire so that the mean firing rates
satisfy:
\[
\lambda_{i}^{\ast}(t)=\gamma\bigg\{\mathbf{E}[R_{i}^{ext}(t)]-\mathbf{E}%
[R_{i}^{inh}(t)]\bigg\},
\]
where $R_{i}^{ext}(t)$ and $R_{i}^{inh}(t)$ are the firing rates of the
excitatory and inhibitory neurons respectively, and $\gamma$ is a scalar
factor. In occurrence of sub-Poisson noise, the noisy control signals $%
u_{i}^{\ast}(t)=\lambda_{i}^{\ast}(t)+\zeta_{i}(t)$ are approximated by
\[
u_{i}^{\ast}(t)=\gamma\big[R_{i}^{ext}(t)-R_{i}^{inh}(t)\big].
\]
Both ensembles of neurons are imposed with baseline activities, which bound
the minimum firing rates away from zeros, given the spontaneous activities
of neurons when no explicit signal is transferred. A numerical approach
involves discretise time $t$ into small bins of identical size $\Delta t$.
The firing rates can be easily estimated by averaging the population
activities in a time bin. We have used $400$ neurons to control the system,
with two ensembles of neurons with equivalent numbers that approximate the
first and second components of control signal, respectively. Each neuron
ensemble have $200$ neurons with $100$ excitatory and $100$ inhibitory
neurons.


\begin{thebibliography}{99}
\bibitem{OLB} Osborne LC, Lisberger SG, Bialek W 2005 \textit{Nature}
\textbf{437} 412--416.

\bibitem{Harris} Harris CM, Wolpert DM 1998 \textit{Nature} \textbf{394}
780-784.

\bibitem{Harris1} Harris CM 1998 \textit{Journal of Neuroscience Methods}
\textbf{83} 73--88.

\bibitem{Feng1} Feng JF, Zhang KW 2002 \textit{J. Phys. A} \textbf{35}
7287--7304;

\bibitem{Feng11} Feng JF, Tartaglia G, Tirozzi B 2004 \textit{J. Phys. A.}
\textbf{37} 4685--4700.

\bibitem{Feng2} Feng JF, Tuckwell HC 2003 \textit{Phys. Rev. Lett.} \textbf{%
91} 018101.

\bibitem{Ross} Rossoni E, Kang J, Feng JF (2010) \textit{Biol. Cybern}
\textbf{102(5)} 441--450.

\bibitem{Win} Winter DA, \textit{Biomechanics and Motor Control of Human
Movement}. (Wiley-Interscience, 2004).

\bibitem{Sim} Simmons G, Demiris Y 2005 \textit{J. Robotic Systems} \textbf{%
22}: 677--690.

\bibitem{Tan} Tanaka H, Krakauer J, Qian N 2006 \textit{J. Neurophysiol.},
\textbf{95} 3875--3886.

\bibitem{Ike} Ikeda S, Sakaguchi Y 2009 \textit{Proceedings of the Joint
48th IEEE CDC and 28th CCC}, 4499.

\bibitem{Roub} Roub\'{\i}\u{c}ek T, \textit{Relaxation in Optimization
Theory and Variational Calculus.} (Walter de Gruyter, Berlin, 1997).

\bibitem{Young} Young LC 1937 \textit{C.R. Soc. Sci. Lettres de Varsovie,
Cl. III} \textbf{30}: 212--234.

\bibitem{Young1} Young LC 1942 \textit{Ann. of Math.} \textbf{43} 84--103;
530--544.

\bibitem{Sub} Horton P, Bonny L, Nicol AU, Kendrick KM, Feng JF 2005 \textit{%
J. Neurosci. Methods,} \textbf{146} 22--41.

\bibitem{Chris} Christen M, Nicol AU, Kendrick KM, Ott T, Stoop R 2006
\textit{Neuroreport} \textbf{17} 1499--1502.

\bibitem{SDE} {\O }ksendal B. Stochastic differential equations: An
introduction and applications. Springer-Verlag Berlin Heidelberg, 1998.

\bibitem{Ross1} Rossoni E, Feng JF, Tirozzi B, Brown D, Leng G, Moos F 2008
\textit{PLoS Comp. Biol.} \textbf{4(7)} e1000123.

\bibitem{Aih} Shimada T, Aihara K 2008 \textit{Mathematical Biosciences}
\textbf{214(1,2)} 134--139.

\bibitem{Aih1} Hirata Y, Bruchovsky N, Aihara K 2010 \textit{J. Theo. Bio.}
\textbf{264} 517--527.

\bibitem{Ric} Richardso R. et al 2005 \textit{Control Engineering Practice},
\textbf{13} 291--303.

\bibitem{Tod} Todorov E, Jordan MJ 2002 \textit{nature neuroscience},
\textbf{5(11)} 1226--1235.

\bibitem{Kit}

Kitazawa K 2002 \textit{Neuroscience Research}, \textbf{43} 289--294.

\bibitem{Mus} Mussa-Ivaldi FA, Solla SA 2004 \textit{IEEE JOURNAL OF OCEANIC
ENGINEERING}, \textbf{29(3)} 640--650.

\bibitem{Chh} Chhabra M, Jacobs RA 2006 \textit{The Journal of Neuroscience}%
, \textbf{26(42)} 10883--10887.

\bibitem{XX} See \cite{Roub} for the details of definitions of Fr\'{e}chet
differential, G\^{a}teaux equi-differential, and partial differential, which
are extended from those of functional/operator on function space.
\end{thebibliography}




\section*{Appendices}

\subsection*{Appendix A: Derivation of formula (\protect\ref{h})}

Let: $W$ be the time-varying p.d.f. $p(x,t)$ that is second-order
continuous-differentiable with respect to $x$ and $t$ that is embedded in
the Sobolev function space $W^{2,2}$; $W_{1}$ be the function space of $%
p(x,t_{0})$, regarding as a function with respect to $x$ with a fixed $t_{0}$%
; $W_{2}$ be the function space of $p(x_{0},t)$, regarding as a function
with respect to $t$ with a given $x_{0}$. The spaces $W_{1,2}$ can be
regarded being embedded in $W$. In addition, let: $\hat{W}$ be the function
space where the image $\mathcal{L}[W]$ is embedded; $L$ be the space of
linear operator $\mathcal{L}$, denoted above; $\mathscr L(Z,E)$ be the space
composed of bounded linear operator from linear space $Z$ to $E$; and $%
Z^{\ast }$ be the dual space of the linear space $Z$: $Z^{\ast }=\mathscr %
L(Z,\mathbb{R})$. Furthermore, let $\tilde{\mathcal{Y}}$ be the tangent
space of Young measure space $\mathcal{Y}$: $\tilde{Y}=\{\eta -\eta ^{\prime
}:~\eta ,\eta ^{\prime }\in \mathcal{Y}\}$. For simplicity, we do not
specify the spaces and just provide the formalistic algebras, and then the
following is similar to Chapter 4.3 in \cite{Roub} with appropriate
modifications.

Define
\begin{eqnarray}
\Phi(p)&=&\int_{T}^{T+R}\int_{\Xi} \|\phi(x,t)-z(t)\|^{2}p(x,t)dx dt
\nonumber \\
\Pi(p,\eta)&=&\bigg(\frac{\partial p}{\partial t}-(\mathcal{L}\cdot\eta)
\circ p,p(x,0)-p_{0}(x)\bigg)  \nonumber \\
J(p)&=&\int_{\Xi}\phi(x,t)p(x,t)dx-z(t).  \label{sym}
\end{eqnarray}
Thus, (\ref{ROP}) can be rewritten as:
\begin{eqnarray}
\left\{%
\begin{array}{ll}
\min_{\eta} & \Phi(p) \\
\mathrm{subject~to} & \Pi(p,\eta)=0,~J(p)=0.%
\end{array}%
\right.  \label{OPT}
\end{eqnarray}
The G\^{a}teaux differentials of these maps with respect to $p(x,t)$,
denoted by $\nabla_{p}\cdot$, are:
\begin{eqnarray*}
(\nabla_{p}\Phi)\circ(\hat{p}-p)&=&\int_{T}^{T+R}\int_{\Xi}\|\phi(x,t)-z(t)%
\|^{2} [\hat{p}(x,t)-p(x,t)]dx dt \\
(\nabla_{p}{\Pi})\circ(\hat{p}-p)&=&\bigg(\frac{\partial(\hat{p}-p)}{%
\partial t} -(\mathcal{L}\cdot\eta)\circ(\hat{p}-p),\hat{p}(x,0)-p(x,0)\bigg)
\\
(\nabla_{p}J)\circ(\hat{p}-p)&=&\int_{\Xi}\phi(x,t)[\hat{p}(x,t)-p(x,t)]dx \\
\end{eqnarray*}
for two time-varying p.d.f. $\hat{p},p\in W$. Here, $\nabla_{p}\Phi\in W^{*}$%
, $\nabla_{p}{\Pi}\in\mathscr L(W, \hat{W}\times W_{1})$, $\nabla_{p} J\in%
\mathscr L(W,W_{2})$. And, the differentials of these maps with respect to
the Young measure $\eta$ are:
\begin{eqnarray*}
(\nabla_{\eta}\Phi)\cdot(\hat{\eta}-\eta)&=&0 \\
(\nabla_{\eta}\Pi)\cdot(\hat{\eta}-\eta)&=& \bigg((\mathcal{L}\circ p)\cdot (%
\hat{\eta}-\eta),0\bigg) \\
(\nabla_{\eta}J)\cdot(\hat{\eta}-\eta)&=&0 \\
\end{eqnarray*}
for two Young measures $\hat{\eta},\eta\in\mathcal{Y}$. Here, $%
\nabla_{\eta}\Phi\in\tilde{\mathcal{Y}}^{*}$, $\nabla_{\eta}\Pi\in\mathscr L(%
\tilde{\mathcal{Y}},\hat{W}\times W_{1})$, and $\nabla_{\eta}J\in\mathscr L(%
\tilde{\mathcal{Y}},W_{2})$.

Then, we are in the position to derive the result of (\ref{h}) by the
following theorem, as a consequence from Theorem 4.1.17 in \cite{Roub}.

\begin{theorem}
\label{thm1} $\Phi:~W\to W^{*}$, $\Pi:~W\times \mathcal{Y}\to\mathbb{R}$ and
$J:~W\to W_{2}$ as defined in (\ref{sym}). Assume that: (1). the trajectory
of $x(t)$ in (\ref{Ito}) is bounded almost surely; (2). $a(x,t)$, $b(x,t)$
and $\phi(x,t)$ are $C^{2}$ with respect to $(x,t)$. 
Let $(\eta^{\ast},p^{\ast})$ be the optimal solution of (\ref{ROP}). Then,
there are some $\lambda_{1}\in\mathscr L(\hat{W}\times W_{1},W^{*}) $, $%
\lambda_{2}=[\lambda_{21},\lambda_{22}]^{\top}$ with $\lambda_{21}\in {%
\mathscr L}(\hat{W},W_{2})$, and $\lambda_{22}\in{\mathscr L}(W_{1},,W_{2})$%
, such that
\begin{eqnarray}
\lambda_{1}\circ\nabla_{p}\Pi(p^{\ast},\eta^{\ast})
=\nabla_{p}\Phi(p^{\ast}),~\lambda_{2}\circ\nabla_{p}\Pi(p^{\ast},\eta^{%
\ast})=\nabla_{p}J(p^{\ast}),  \label{cond2}
\end{eqnarray}
and the abstract maximum principle
\begin{equation}
\eta _{t}^{\ast}\bigg\{\mathrm{minima~of~}h(t,\xi )\mathrm{~w.r.t}~\xi %
\bigg\}=1,~\forall ~t\in \lbrack 0,T+R].  \label{amp}
\end{equation}
holds with "abstract Hamiltonian":
\begin{eqnarray}
h(t,\xi)&=&-\int_{\Xi }p^{\ast }(x,t)\bigg\{\sum_{i=1}^{n}A_{i}(x,t,\xi )%
\frac{\partial \mu _{1}}{\partial x_{i}}  \nonumber \\
&&+\frac{1}{2}\sum_{i,j=1}^{n}[B(x,t,\xi )B(x,t,\xi )^{\top }]_{ij}\frac{%
\partial ^{2}\mu _{1}}{\partial x_{i}\partial x_{j}}\bigg\}dx.  \label{h1}
\end{eqnarray}
\end{theorem}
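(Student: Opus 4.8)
The plan is to obtain Theorem~\ref{thm1} as a specialisation of the abstract Hamiltonian maximum principle (Theorem~4.1.17 in \cite{Roub}) to the relaxed problem (\ref{OPT}). First I would pin down the functional-analytic setting left implicit above: let $p$ range over a parabolic Sobolev/Bochner space on $[0,T+R]\times\Xi$ (so that $\partial_t p$ and the second $x$-derivatives occurring in $\mathcal L$ are controlled), and let $\hat W$, $W_1$, $W_2$ be the image and trace spaces appearing in (\ref{sym}). Assumption~(1) (almost-sure boundedness of the trajectory of (\ref{Ito}), hence $p^{\ast}(\cdot,t)$ supported in a fixed compact subset of $\Xi$) together with assumption~(2) ($a$, $b$, $\phi$ of class $C^{2}$) is exactly what makes the coefficients of $\mathcal L$ bounded and $\Phi$, $\Pi$, $J$ continuously G\^{a}teaux-differentiable with the derivatives already listed. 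I would also record the two structural facts that put (\ref{OPT}) into the scope of \cite{Roub}: $\eta\mapsto\Pi(p,\eta)$ is \emph{affine}, since $(\mathcal L\cdot\eta)\circ p=\int_{\Omega}\mathcal L(t,x,\xi)\circ p\,\eta_t(d\xi)$ is linear in $\eta$, while $\Phi$ and $J$ do not depend on $\eta$; and the Young measure set $\mathcal Y$ is convex and weak$^{\ast}$-compact \cite{Young,Young1}.

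Second, I would verify the constraint qualification required by Theorem~4.1.17: that the partial linearisation $\nabla_p(\Pi,J)(p^{\ast},\eta^{\ast}):W\to\hat W\times W_1\times W_2$ is onto, or satisfies the corresponding closed-range/regularity condition guaranteeing a \emph{normal} multiplier. Concretely this is solvability, for arbitrary data, of the linearised Fokker--Planck Cauchy problem $\partial_t v-(\mathcal L\cdot\eta^{\ast})\circ v=$ (given), $v(\cdot,0)=$ (given), supplemented on $[T,T+R]$ by the prescribed trace $\int_{\Xi}\phi(x,t)v(x,t)\,dx=$ (given) --- i.e. the existence/uniqueness assumed for (\ref{FPE}) in the paper, applied to the linearised equation. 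Granting it, Theorem~4.1.17 supplies bounded multipliers $\lambda_1$, $\lambda_2$ of the types stated, the adjoint identities (\ref{cond2}), and the maximum principle (\ref{amp}): for each $t$, $\eta_t^{\ast}$ is carried by the minimisers over $\xi\in\Omega$ of the associated abstract Hamiltonian. From (\ref{cond2}) one moreover reads off that $\lambda_1$ is (the density $\mu_1(x,t)$ of) the solution of the backward adjoint problem $-\partial_t\mu_1-(\mathcal L\cdot\eta^{\ast})^{\dagger}\mu_1=\|\phi-z\|^{2}\mathbf 1_{[T,T+R]}$ with $\mu_1(\cdot,T+R)=0$, where $(\mathcal L\cdot\eta^{\ast})^{\dagger}$ is the formal adjoint of the averaged Fokker--Planck operator.

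Third, I would make the Hamiltonian explicit. Up to the sign fixed by the relaxed-control convention of \cite{Roub}, its integrand in $\xi$ is $-\int_{\Xi}\mu_1(x,t)\,[\mathcal L(t,x,\xi)\circ p^{\ast}(x,t)]\,dx$, obtained by pairing $\lambda_1=(\mu_1,\cdot)$ with $(\nabla_\eta\Pi)(p^{\ast},\eta^{\ast})\cdot(\,\cdot\,)=((\mathcal L\circ p^{\ast})\cdot(\,\cdot\,),0)$. Integrating by parts in $x$ to transfer $\mathcal L$ onto $\mu_1$ (no boundary term, as $p^{\ast}(\cdot,t)$ is compactly supported in $\Xi$) turns the forward operator into the backward generator acting on $\mu_1$ and yields precisely
\[
h(t,\xi)=-\int_{\Xi}p^{\ast}(x,t)\Big\{\sum_{i=1}^{n}A_i(x,t,\xi)\frac{\partial\mu_1}{\partial x_i}+\frac12\sum_{i,j=1}^{n}[B(x,t,\xi)B(x,t,\xi)^{\top}]_{ij}\frac{\partial^{2}\mu_1}{\partial x_i\partial x_j}\Big\}\,dx,
\]
which is (\ref{h1}). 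Since $\xi\mapsto|\xi_k|^{2\alpha}$ is continuous even for $\alpha<1/2$, $h(t,\cdot)$ is continuous on the compact set $\Omega$, so the minimum in (\ref{amp}) is attained and the statement is non-vacuous. Finally, substituting $A_i=a_i+\sum_j b_{ij}\xi_j$ and $[BB^{\top}]_{ij}=\sum_k\kappa_k^{2}b_{ik}b_{jk}|\xi_k|^{2\alpha}$ and collecting the $\xi$-independent terms, the terms linear in $\xi_i$, and the terms proportional to $|\xi_i|^{2\alpha}$ produces the announced form (\ref{h}).

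The step I expect to be the real obstacle is the constraint qualification of the second paragraph. The Fokker--Planck operator here is genuinely degenerate parabolic: its diffusion matrix $b\,\mathrm{diag}(\kappa_k^{2}|\xi_k|^{2\alpha})\,b^{\top}$ is positive semidefinite but fails to be positive definite whenever $m<n$, $b$ is rank-deficient, or some $\xi_k=0$ --- and the optimal measure does charge $\xi_k=0$. Hence solvability of the linearised state (and adjoint) equation in a Sobolev scale, and with it the surjectivity of $\nabla_p(\Pi,J)$, cannot be lifted from uniformly-parabolic theory; it has to be secured either by an extra H\"ormander-type nondegeneracy hypothesis on $(a,b)$, or by taking $p$ in a weaker (measure-valued or distributional) solution class and $\mu_1$ in a correspondingly smoother one. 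This is precisely the gap the paper leaves open by "not specifying the spaces", and a fully rigorous version of Theorem~\ref{thm1} must commit to one of these choices.
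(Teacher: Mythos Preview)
Your proposal is correct and follows essentially the same route as the paper: invoke Theorem~4.1.17 of \cite{Roub}, exploit that $\nabla_\eta\Phi=\nabla_\eta J=0$ so the abstract Hamiltonian reduces to $-\langle\mu_1,(\mathcal L\circ p^\ast)\cdot\tilde\eta\rangle$, then integrate by parts in $x$ to pass from $\mathcal L$ to its adjoint $\mathcal L^\ast$ acting on $\mu_1$ and obtain (\ref{h1}).

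Two small points of contact with the paper's own argument are worth noting. First, rather than checking surjectivity of the joint linearisation $\nabla_p(\Pi,J)$ as you do, the paper inverts only $\nabla_p\Pi$ (using the assumed well-posedness of (\ref{FPE})), invokes Lemma~1.3.16 of \cite{Roub} to compute the reduced gradients $\partial\Phi=\nabla_\eta\Phi-\lambda_1\circ\nabla_\eta\Pi$ and $\partial J=\nabla_\eta J-\lambda_2\circ\nabla_\eta\Pi$, and only then applies Theorem~4.1.17 with $c(t)$ as the multiplier for $J$; the combined adjoint $\mu_1=\lambda_1+c(t)\lambda_{21}$ then satisfies the backward equation you wrote. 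Second, the degeneracy concern you flag in your last paragraph is real, but the paper does not resolve it: it simply \emph{assumes} existence and uniqueness for the Cauchy problem (\ref{FPE}) as a standing hypothesis, and takes the bounded invertibility of $\nabla_p\Pi$ as a formal consequence. So your diagnosis that a fully rigorous version must commit to a concrete function-space choice (or an additional nondegeneracy hypothesis) is accurate and matches the paper's own disclaimer that it ``do[es] not specify the spaces and just provide[s] the formalistic algebras.''
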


\begin{proof}
Under the conditions in this theorem, we can conclude that the Fokker-Planck
equation has a unique solution $p(\eta)$ that is continuously dependent of $%
\eta$ from theory of stochastic differential equation \cite{SDE}; $%
\Pi(\cdot,\eta):W\to W^{*}$ is Fr\'{e}chet differentiable at $p=\pi(\eta)$
because $x(t)$ is assumed to be almost surely bounded; $\Pi(p,\cdot):%
\mathcal{Y}\to W^{*}$ and $J$ (in fact $\nabla_{\eta}J=0$) is G\^{a}teaux
equi-differentiable around $p\in W$ because of $p\in W\subset W^{2,2}$ with $%
(x,t)$ bounded \cite{XX}; the partial differential $\nabla_{\eta}\Pi$ is
weak-continuous with respect to $\eta$ because it is linearly dependent of $%
\eta$.

In addition, from the existence and uniqueness of the Fokker-Planck
equation, $\nabla_{p}\Pi(p,\eta):W\to Im (\nabla_{p}\Pi(p,\eta))\subset \bar{%
W}\times W_{1}$ has a bounded inverse. This implies that the following\emph{%
adjoint equation}
\[
\mu \circ \nabla _{p}\Pi =\nabla _{p}\Phi +c(t)\circ \nabla _{p}J,
\]%
has a solution for $p=p(\eta)$, denoted by $\mu$. Let $\mu =[\mu _{1},\mu
_{2}]$, which should be solution of the following equation
\begin{equation}
\left\{
\begin{array}{l}
\frac{\partial \mu _{1}}{\partial t}+(\mathcal{L}^{\ast }\cdot \eta ^{\ast
})\circ \mu _{1}=-\Vert \phi (x,t)-z(t)\Vert ^{2}-c(t)\phi (x,t) \\
\mu _{2}(x)=\mu _{1}(x,0) \\
\mu _{1}(x,T+R)=0 \qquad t\in \lbrack 0,T+R],~x\in \Xi ,%
\end{array}%
\right.
\end{equation}%
with the dual operator $\mathcal{L}^{\ast }$ of $\mathcal{L}$ (the operator
in the back-forward Kolmogorov equation), still dependent of $(x,t)$ and the
value of $\lambda (t)$ (namely $\xi$ in Young measure):
\[
\mathcal{L}^{\ast }\circ q=\sum_{i=1}^{n}a_{i}(x,t,\xi )\frac{\partial q}{%
\partial x_{i}}+\frac{1}{2}\sum_{i,j=1}^{n}[B(x,t,\xi )B^{\top }(x,t,\xi
)]_{ij}\frac{\partial ^{2}q}{\partial x_{i}x_{j}}.
\]%
We pick $\lambda_{1,2}(t)$ with $\mu _{1}=\lambda _{1}+c(t)\lambda _{21}$
and $\mu _{2}=\lambda _{1}+c(t)\lambda _{22}$. So, $\lambda_{1,2}$ should
satisfy equation (\ref{cond2}). In fact, $\lambda_{1,2}$ can be regarded as
functions ( or generalized functions) with respect to $(x,t)$.

Thus, the conditions of Lemma 1.3.16 in \cite{Roub} can be verified, which
implies that the gradients of the maps $\Phi$ and $J$ with respect to $\eta$%
, by regarding $p=p(\eta)$ from $\Pi(p,\eta)=0$, as follows:
\begin{eqnarray*}
\partial\Phi=\nabla_{\eta}\Phi-\lambda_{1}\circ\nabla_{\eta}\Pi, ~\partial
J=\nabla_{\eta}J-\lambda_{2}\circ\nabla_{\eta}\Pi.
\end{eqnarray*}

From the abstract Hamilton minimum principle (Theorem 4.1.17 in \cite{Roub}%
), applied to each solution of (\ref{ROP}), denoted by $\eta ^{\ast }$,
there exists a nonzero function $c(t) $ such that
\begin{equation}
H(\tilde{\eta})=\partial \Phi (\eta ^{\ast })\cdot \tilde{\eta}+\langle
c(t),\partial J(\eta ^{\ast })\cdot \tilde{\eta}\rangle ,~\forall ~\tilde{%
\eta}\in \mathcal{Y}  \label{cond1}
\end{equation}%
is an 'abstract Hamiltonian', with respect to $\tilde{\eta}$. With the
definitions of $\lambda_{1,2}$, (\ref{cond1}) becomes
\[
H(\tilde{\eta})=\nabla _{\eta }\Phi \cdot \tilde{\eta}+c\nabla _{\eta
}J\cdot \tilde{\eta}-\langle \mu ,\nabla _{\eta }\Pi \cdot \tilde{\eta}%
\rangle =-\langle \mu ,\nabla _{\eta }\Pi \cdot \tilde{\eta}\rangle ,
\]%
owing to $\nabla _{\eta }\Phi =\nabla _{\eta }J=0$.

By specifying $\mu$ with $\lambda_{1,2}$, we have
\begin{eqnarray*}
H(\tilde{\eta}) &=&-\langle \mu ,\nabla _{\eta }\Pi \cdot \tilde{\eta}%
\rangle =-\langle \mu _{1},[\mathcal{L}\circ p^{\ast }]\cdot \tilde{\eta}%
\rangle =-\langle p^{\ast },[\mathcal{L}^{\ast }\circ \mu _{1}]\cdot \tilde{%
\eta}\rangle \\
&=&-\int_{0}^{T+R}\int_{\Xi }\int_{\Omega }p^{\ast }\bigg\{%
\sum_{i=1}^{n}a_{i}(x,t,\xi )\frac{\partial \mu _{1}}{\partial x_{i}} \\
&&+\frac{1}{2}\sum_{i,j=1}^{n}[B(x,t,\xi )B(x,t,\xi )^{\top }]_{ij}\frac{%
\partial ^{2}\mu _{1}}{\partial x_{i}\partial x_{j}}\bigg\}\tilde{\eta}%
_{t}(d\xi )dxdt,
\end{eqnarray*}%
where $p^{\ast }$ stands for the time-varying density corresponding to the
optimal Young measure solution $\eta ^{\ast }$. From this, letting $%
\xi=\lambda$, we have the "abstract Hamiltonian" in the form of (\ref{h1})
as the Hamiltonian integrand of $H(\cdot)$.

The Hamiltonian abstract minimum (maximum) principle indicates the optimal
Young measure $\eta _{t}^{\ast }$ is only concentrated at the minimum points
of $h(t,\xi )$ with respect to $\xi $ for each $t$, namely. That is, (\ref%
{amp}) holds. This completes the proof.
\end{proof}

From this theorem, since the variances depend on the magnitude of the signal
as described in (\ref{sigma i}), removing the terms without $\xi$, it is
equivalent to look at the minima of $\hat{h}(t,\xi )$ in the form of
\begin{equation}
\hat{h}(t,\xi ) =\sum_{i=1}^{m}h_{i}(t,\xi _{i}), \quad h_{i}(t,\xi _{i})
=g_{i}(t)|\xi _{i}|^{2\alpha }-f_{i}(t)\xi _{i}  \label{Hamilton}
\end{equation}%
instead of $h(t,\xi )$, where
\begin{eqnarray*}
f_{i}(t) &=&\sum_{j=1}^{m}\int_{\Xi }p^{\ast }(x,t)b_{ji}(x,t)\frac{\partial
\mu _{1}}{\partial x_{i}}dx, \\
g_{i}(t) &=&-\frac{\kappa _{i}^{2}}{2}\sum_{j,k=1}^{m}\int_{\Xi }p^{\ast
}(x,t)b_{ji}(x,t)b_{ki}(x,t)\frac{\partial ^{2}\mu _{1}}{\partial
x_{j}\partial x_{k}}dx.
\end{eqnarray*}%
This gives formula (\ref{h}).

\subsection*{Appendix B: Derivation of precise control performance (\protect
\ref{error_est})}

The control performance inequality (\ref{error_est}) can be derived from the
following theorem.

\begin{theorem}
\label{thm2} Let $\hat{x}$ be the solution of equation (\ref{ds}) and $%
0<\alpha<0.5$. Assume that there are a positive measurable function $%
\kappa(t)$ and a positive constant $C_{1}$ such that $\Vert
A(x,t,u)-A(y,t,u)\Vert\leq\kappa(t)\Vert x-y\Vert$, $\Vert
B(x,t,u)\Vert^{2}\leq\kappa(t)\sum_{k=1}^{m}|u_{k}|^{2\alpha} $ and$%
|\phi(x,t)-\phi(y,t)|\leq C_{1}\Vert x-y\Vert$ hold for all $x,y\in\mathbb{R}%
^{n}$ and $t\geq 0$. Then, for any non-random initial value, namely, $x(0)=%
\mathbf{E}(x(0))$, with the non-optimal Young measure (\ref{functional
density1}), we have

\begin{enumerate}
\item $\int_{T}^{T+R}\Vert\mathbf{E}(x(t))-z(t)\Vert\to 0$;

\item $\min_{\eta}\sqrt{\int_{T}^{T+R}\mathrm{var}(x)dt}=O\bigg(\frac{1}{%
M_{Y}^{1/2-\alpha}}\bigg)$
\end{enumerate}

as $M_{Y}\to\infty$.
\end{theorem}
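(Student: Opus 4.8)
The plan is to compare the controlled process $x(t)$ generated by the non-optimal parameterized measure $\hat\eta$ of (\ref{functional density1}) with the noise-free trajectory $\hat x(t)$ of (\ref{ds}), and to show that the mean-square discrepancy between them is $O(M_Y^{2\alpha-1})$; since $\alpha<1/2$ this vanishes as $M_Y\to\infty$, and both assertions fall out immediately.

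First I would make the process precise. Under $\hat\eta$ the density $p(x,t)$ of $x(t)$ solves $\partial_t p=(\mathcal L\cdot\hat\eta)\circ p$, and the assumed well-posedness of that Cauchy problem lets us realise $x(t)$ as the solution of the It\^o equation
\begin{equation}
dx=\bar A(x,t)\,dt+\bar\Sigma(x,t)\,d\tilde W_t,\qquad \bar A(x,t)=\int_\Omega A(x,t,\xi)\,\hat\eta_t(d\xi),\quad \bar\Sigma\bar\Sigma^\top(x,t)=\int_\Omega B(x,t,\xi)B^\top(x,t,\xi)\,\hat\eta_t(d\xi),
\end{equation}
the averaged diffusion matrix being a mixture of positive-semidefinite matrices and hence admissible. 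The decisive structural point is that $A_i(x,t,\xi)=a_i(x,t)+\sum_j b_{ij}(x,t)\xi_j$ is \emph{affine} in $\xi$, while by the very construction of (\ref{functional density1}) each marginal $\hat\eta_{i,t}$ has mean $\hat u_i(t)$; hence $\bar A(x,t)=A(x,t,\hat u(t))$ \emph{exactly}. So $x$ and $\hat x$ carry the same drift field and differ only through $\bar\Sigma$.

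Next I would estimate $\bar\Sigma$. Since $\hat\eta$ is atomic on $\xi_k\in\{0,\pm M_Y\}$ with mass $|\hat u_k(t)|/M_Y$ on the nonzero atom, the hypothesis $\Vert B(x,t,\xi)\Vert^2\le\kappa(t)\sum_k|\xi_k|^{2\alpha}$ averages to $\mathrm{tr}(\bar\Sigma\bar\Sigma^\top)(x,t)\le C\kappa(t)M_Y^{2\alpha-1}\sum_k|\hat u_k(t)|$, with $C$ depending only on the dimensions; and since $\hat x$ evolves in the bounded state space $\Xi$ under $C^2$ coefficients, $\hat u$ may be taken bounded on $[0,T+R]$, so $\sup_x\mathrm{tr}(\bar\Sigma\bar\Sigma^\top)(x,t)\le C'\kappa(t)M_Y^{2\alpha-1}$. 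This is precisely where the sub-Poisson hypothesis enters: $M_Y^{2\alpha-1}\to0$ iff $\alpha<1/2$, and I expect this uniform diffusion bound with the exact exponent $M_Y^{2\alpha-1}$ to be the one step that is not purely formal — it is the quantitative core of the ``precise control'' phenomenon.

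The remainder is a standard moment estimate. Setting $e(t)=x(t)-\hat x(t)$ (so $e(0)=0$ by the non-random, matched initial datum), one has $de=[A(x,t,\hat u)-A(\hat x,t,\hat u)]\,dt+\bar\Sigma(x,t)\,d\tilde W_t$; applying It\^o's formula to $\Vert e\Vert^2$, taking expectations, and using the Lipschitz bound $\Vert A(x,t,u)-A(y,t,u)\Vert\le\kappa(t)\Vert x-y\Vert$ together with the diffusion estimate gives
\begin{equation}
\frac{d}{dt}\mathbf{E}\Vert e(t)\Vert^2\le 2\kappa(t)\,\mathbf{E}\Vert e(t)\Vert^2+C'\kappa(t)M_Y^{2\alpha-1},
\end{equation}
so Gr\"onwall's inequality yields $\sup_{t\le T+R}\mathbf{E}\Vert x(t)-\hat x(t)\Vert^2\le C''M_Y^{2\alpha-1}$ with $C''$ depending only on $\int_0^{T+R}\kappa$ and $\sup\Vert\hat u\Vert_1$. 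To finish: on $[T,T+R]$ we have $\hat x(t)=z(t)$, so $\Vert\mathbf{E}(x(t))-z(t)\Vert=\Vert\mathbf{E}(e(t))\Vert\le(\mathbf{E}\Vert e(t)\Vert^2)^{1/2}=O(M_Y^{\alpha-1/2})$ and integration over $[T,T+R]$ gives assertion 1 (which $\to0$); and since the mean minimises $c\mapsto\mathbf{E}\Vert x(t)-c\Vert^2$, $\mathrm{var}(x(t))\le\mathbf{E}\Vert x(t)-\hat x(t)\Vert^2=O(M_Y^{2\alpha-1})$, whence $\sqrt{\int_T^{T+R}\mathrm{var}(x)\,dt}=O(M_Y^{-(1/2-\alpha)})$; a perturbation of $\hat\eta$ of the same order $O(M_Y^{\alpha-1/2})$ restores the exact expectation constraint without altering the order of the variance, so the minimum over feasible $\eta$ obeys the same bound. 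If one wants the conclusions stated for the output $\phi(x,t)$ in place of $x$, the Lipschitz hypothesis $|\phi(x,t)-\phi(y,t)|\le C_1\Vert x-y\Vert$ transfers every estimate verbatim; the only remaining care is the measure-theoretic identification of $x(t)$ with the averaged-coefficient diffusion and the asymptotic-feasibility remark.
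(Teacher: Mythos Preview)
Your proposal is correct and follows essentially the same route as the paper: identify that the averaged drift under $\hat\eta$ equals $A(x,t,\hat u(t))$ exactly (because $A$ is affine in $\xi$ and $\hat\eta$ has mean $\hat u$), bound the averaged diffusion by $\kappa(t)M_Y^{2\alpha-1}\sum_k|\hat u_k(t)|$, and close with a Gr\"onwall argument on $\mathbf{E}\Vert x(t)-\hat x(t)\Vert^2$. The only tactical difference is that you run It\^o's formula on $\Vert e\Vert^2$ to obtain a differential inequality, whereas the paper writes the integral form directly, applies Cauchy--Schwarz to the drift integral, and then invokes Gr\"onwall; both are standard and equivalent. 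Your explicit remark about perturbing $\hat\eta$ to restore exact feasibility of the expectation constraint is a point the paper leaves more implicit.
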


\begin{proof}
Comparing the differential equation of $x$, i.e. (\ref{Ito}), and that of $%
\hat{x}$, (\ref{ds}), we have $d(x-\hat{x})=[A(x,t,\hat{u})-A(\hat{x},t,\hat{%
u})]dt+B(x,t,\lambda(t))dW_{t}$. And, replacing $\lambda(t)$ with the Young
measure $\hat{\eta}_{t}(\cdot)$, in the form of (\ref{functional density1}),
from the conditions in this theorem, we have
\begin{eqnarray*}
& \mathbf{E}\Vert x(\tau)-\hat{x}(\tau)\Vert^{2}=\mathbf{E}\bigg\{\int
_{0}^{\tau}[A(x(t),t,\hat{u})-A(\hat{x}(t),t,\hat{u})]dt\bigg\}^{2} \\
& +\mathbf{E}\bigg\{\int_{0}^{\tau}\Vert B(x,t,\lambda)\Vert^{2}\cdot \hat{%
\eta}_{t}dt\bigg\} \\
& \leq\mathbf{E}\bigg\{\int_{0}^{\tau}\kappa(t)\Vert x(t)-\hat{x}(t)\Vert dt%
\bigg\}^{2}+\sum_{k=1}^{m}\int_{0}^{\tau}\kappa(t)|\lambda_{k}|^{2\alpha
}\cdot\hat{\eta}_{k,t}dt \\
& \leq\int_{0}^{\tau}\kappa^{2}(s)ds\int_{0}^{\tau}\mathbf{E}\Vert x(t)-%
\hat {x}(t)\Vert^{2}dt+\sum_{k=1}^{m}\int_{0}^{\tau}\frac{M^{2\alpha}_{Y}}{%
M_{Y}}\kappa(t)|\hat{u}_{k}(t)|dt,
\end{eqnarray*}
for any $\tau>0$. By using Gr\"{o}nwall's inequality, we have
\begin{eqnarray}
&& \int_{T}^{T+R}\mathbf{E}\Vert x(\tau)-\hat{x}(\tau)\Vert^{2}d\tau dt\leq
\int_{0}^{T+R}\mathbf{E}\Vert x(\tau)-\hat{x}(\tau)\Vert^{2}d\tau dt\le
\nonumber \\
&&\int_{0}^{T+R}\exp\bigg[\int_{t}^{T+R}\int_{0}^{s}\kappa^{2}(\tau)d\tau ds%
\bigg] \frac{1}{M^{1-2\alpha}_{Y}}\sum_{k=1}^{m}\kappa(t)|\hat{u}_{k}(t)|dt.
\label{errorest}
\end{eqnarray}
Noting that for $0<\alpha<0.5$, $\lim_{M_{Y}\rightarrow\infty}1/M_{Y}^{1-2%
\alpha}=0$ implies that $\int_{T}^{T+R}\mathbf{E}\Vert x(\tau)-\hat{x}%
(\tau)\Vert^{2}d\tau dt=O(1/M_{Y}^{1-2\alpha})$ as $M_{Y}$goes to infinity.
This proves the second item in this theorem.

In addition,
\begin{eqnarray*}
\int_{T}^{T+R}\Vert\mathbf{E}(x(t))-z(t)\Vert \leq \sqrt{R}\sqrt{\mathbf{E}%
\int_{T}^{T+R}\{\Vert x(t)-\hat{x}(t)\Vert^{2}dt\}}
\end{eqnarray*}
also approaches zero as $M_{Y}$ goes to infinity. This proves the first item
of the theorem. This completes the proof.
\end{proof}

Hence, as $M_{Y}$ goes to infinity, the non optimal solution (\ref%
{functional density1}) can asymptotically satisfy the constraint and the
error variance goes to zero as $M_{Y}$ goes to infinity. Therefore, the
performance error of the REAL optimal solution of the optimisation problem (%
\ref{ROP}) approaches zero as $M_{Y}\rightarrow\infty$ in the case of $%
\alpha<0.5$. Furthermore, we can conclude from (\ref{errorest}) that the
execution error, measured by the standard deviation, can be approximated as (%
\ref{error_est}).

\section*{References}


\newpage

\begin{figure}[tbp]
\centering\includegraphics[width=.8\textwidth]{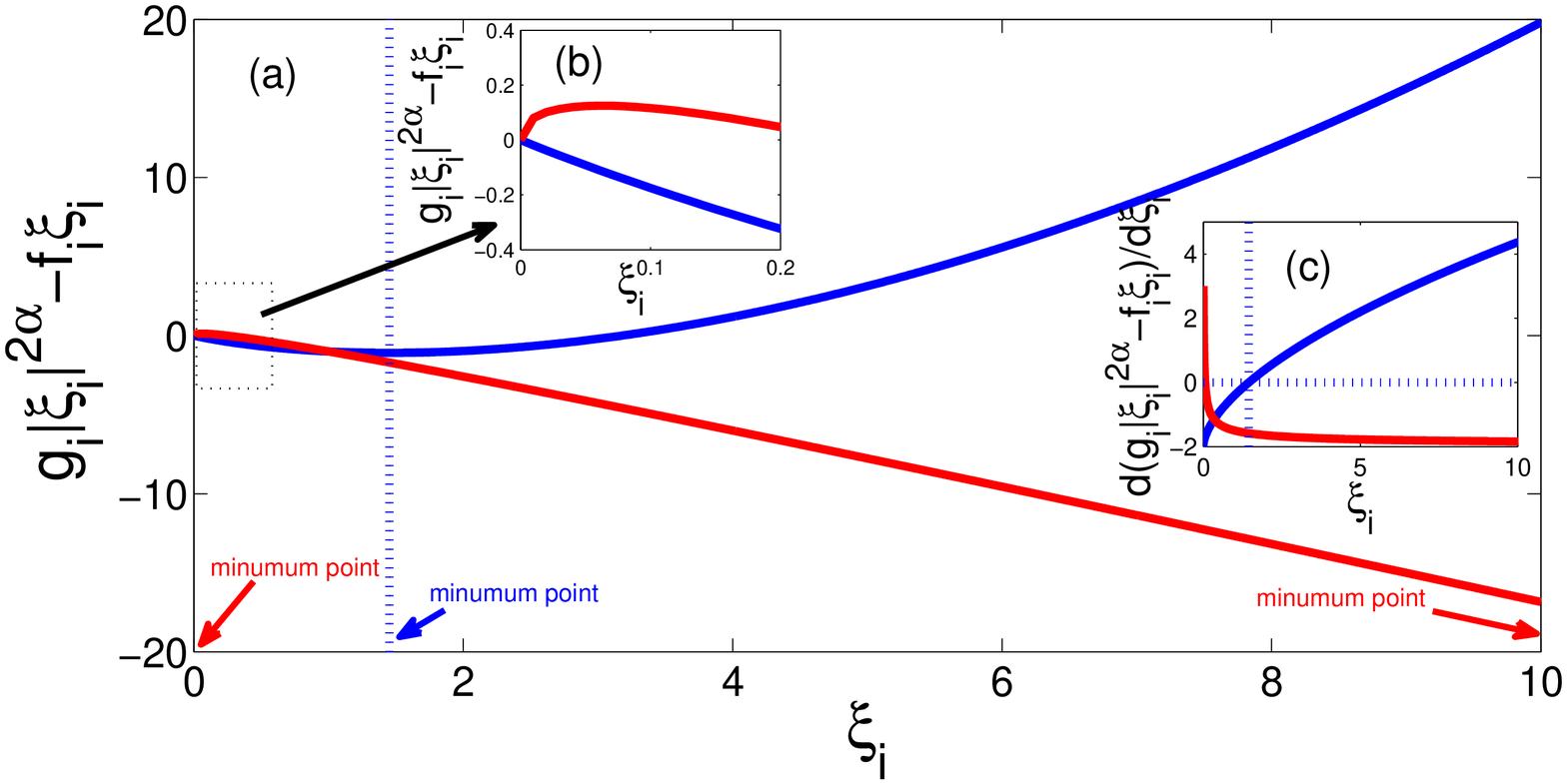}
\caption{Illustration of possible minimum points of the function $g_{i}(t)|%
\protect\xi_{i}|^{2\protect\alpha}-f_{i}(t)\protect\xi_{i}$ in $h_{i}(t,%
\protect\xi_{i}) $ with respect to the variable $\protect\xi_{i}$ with $%
g_{i}(t)=1$, $f_{i}(t)=2$ and $M_{Y}=10$ for $\protect\alpha=0.8>0.5$ (blue
curves) and $\protect\alpha=0.25<0.5$ (red curves): (a). the plots of $%
g_{i}(t)|\protect\xi_{i}|^{2\protect\alpha}-f_{i}(t)\protect\xi_{i} $ with
respect to $\protect\xi_{i}$ for $\protect\alpha=0.8$ (blue) and $\protect%
\alpha=0.25$ (red) and their mimimum points; (b) the inner plot of $g_{i}(t)|%
\protect\xi_{i}|^{2\protect\alpha}-f_{i}(t)\protect\xi_{i}$ for $\protect\xi%
_{i}\in[0,0.2]$ to show that the $\protect\xi_{i}=0$ does be a minimum point
for $\protect\alpha=0.25$ (red); (c). the plots of the derivatives of $%
g_{i}(t)|\protect\xi_{i}|^{2\protect\alpha}-f_{i}(t)\protect\xi_{i}$ with
respect to $\protect\xi_{i}$ for $\protect\alpha=0.8$ (blue) and $\protect%
\alpha=0.25$ (red). }
\label{al}
\end{figure}

\begin{table}[ht]
\caption{Summary of the possible minimum points of (\protect\ref{h}).}
\label{maximum points}\centering{%
\begin{tabular}{@{\vrule height 10.5pt depth4pt  width0pt}lcc}
& $g_{i}(t)>0$ & $g_{i}(t)<0$ \\ \hline
$\alpha>0.5$ & one point in $[-M_{Y},M_{Y}]$ & $\{M_{Y}\}$ or $\{-M_{Y} \}$
\\
$\alpha<0.5$ & $\{0,M_{Y}\}$ or $\{0,-M_{Y}\}$ & $\{M_{Y}\}$ or $\{-M_{Y} \}$
\\ \hline
\end{tabular}%
}
\end{table}

\begin{figure}[tbp]
\centering\includegraphics[width=.8\textwidth]{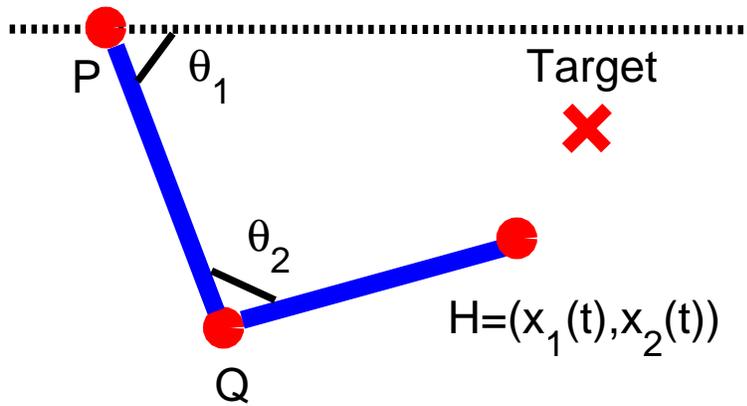}
\caption{Illustration of the arm control. Arm is composed of three points ($%
P $, $Q$, and $H$), where $P$ is fixed and others not, and two arms (upper
arm $PQ$ and the forearm $QH$). Button $H$ is to reach some given target
(red cross) by moving front- and back-arms. }
\label{arms}
\end{figure}

\begin{table}[ht]
\caption{Parameters.}
\label{parameters}{\centering%
\begin{tabular}{@{\vrule height 10.5pt depth4pt  width0pt}lc}
Parameters & Values \\ \hline
masses (of the inertia w.r.t the mass center) & $m_{1}=2.28~kg$, $%
m_{2}=1.31~kg$ \\
lengths (of the inertia w.r.t the mass center) & $l_{1}=0.305~m$, $%
l_{2}=0.254~m$ \\
moments (of the inertia w.r.t the mass center) & $I_{1}=0.022~kg\cdot m^2$; $%
I_{2}=0.0077~kg\cdot m^{2}$ \\
lengths of arms & $r_{1}=0.133~m$, $r_{2}=0.109~m$ \\
reach time & $T=650~ms$ (except in Figs. \ref{meanv1} and \ref{factv1}) \\
duration & $R=10~ms$ (except in Figs. \ref{meanv1} and \ref{factv1}) \\
target & $\theta_{1}(T)=-1$, $\theta_{2}(T)=\pi/2$ \\
scale parameter & $r_{0}=1$ \\
noise scale & $\kappa_{0}=1$ \\
bound of the control signal & $M_{Y}=20000$, except in Figs. \ref{al} and %
\ref{figs1} \\
& and the inset plot of Fig. \ref{figs2} \\
time step & $\Delta t=0.01~ms$ \\ \hline
\end{tabular}%
}
\end{table}

\begin{figure}[tbp]
\centering\includegraphics[width=.9\textwidth]{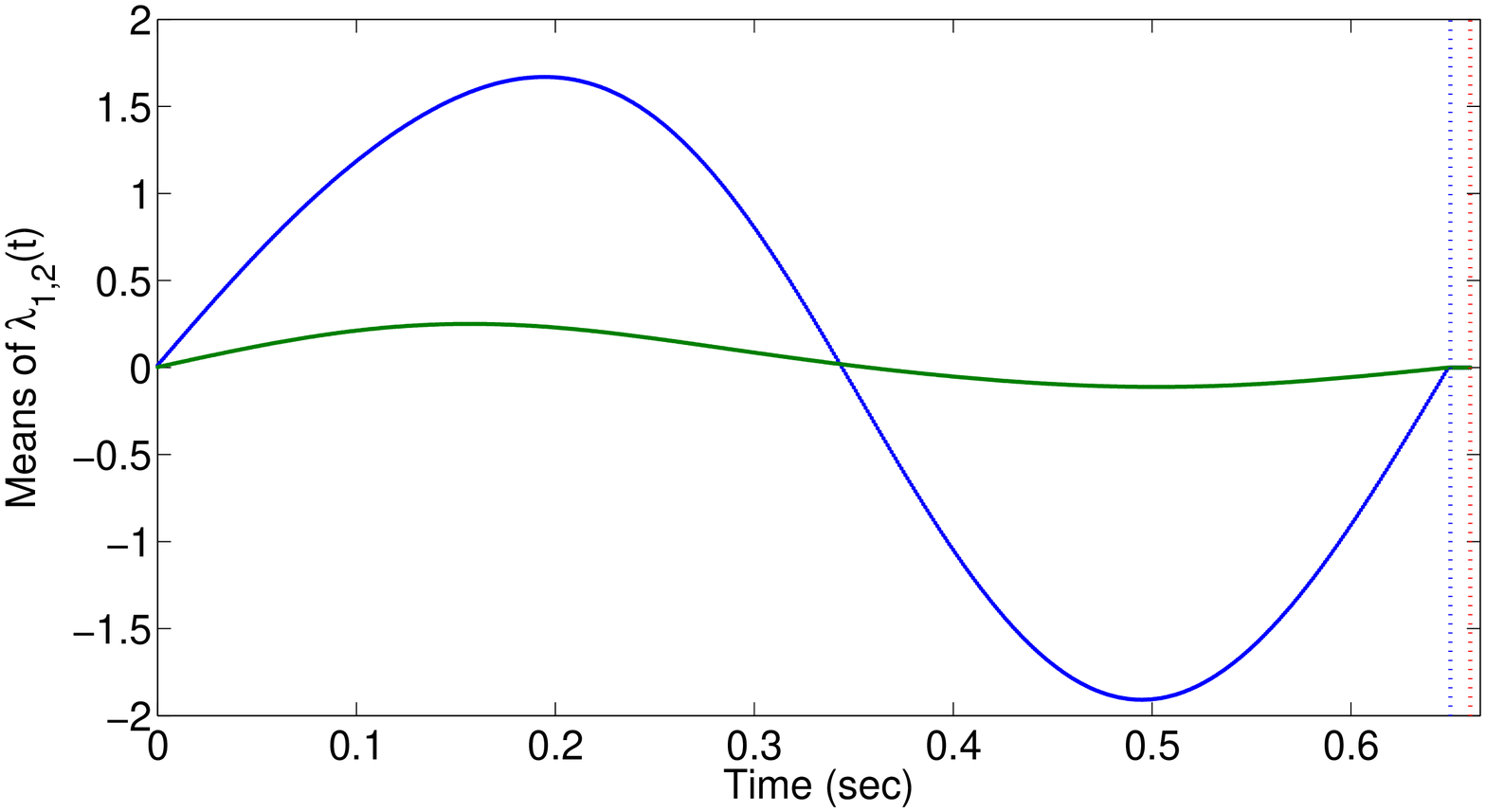}
\caption{The means of the optimal control signals $\protect\lambda_{1}(t)$
(blue solid) and $\protect\lambda_{2}$ (green solid) in the
straight-trajectory arm movement example with $\protect\alpha =0.25$ and $%
M_{Y}=20000$. The blue and red dash vertical lines stand for the start and
end time points of the duration of reaching the target respectively.}
\label{mean}
\end{figure}

\begin{figure}[tbp]
\centering\includegraphics[width=.9\textwidth]{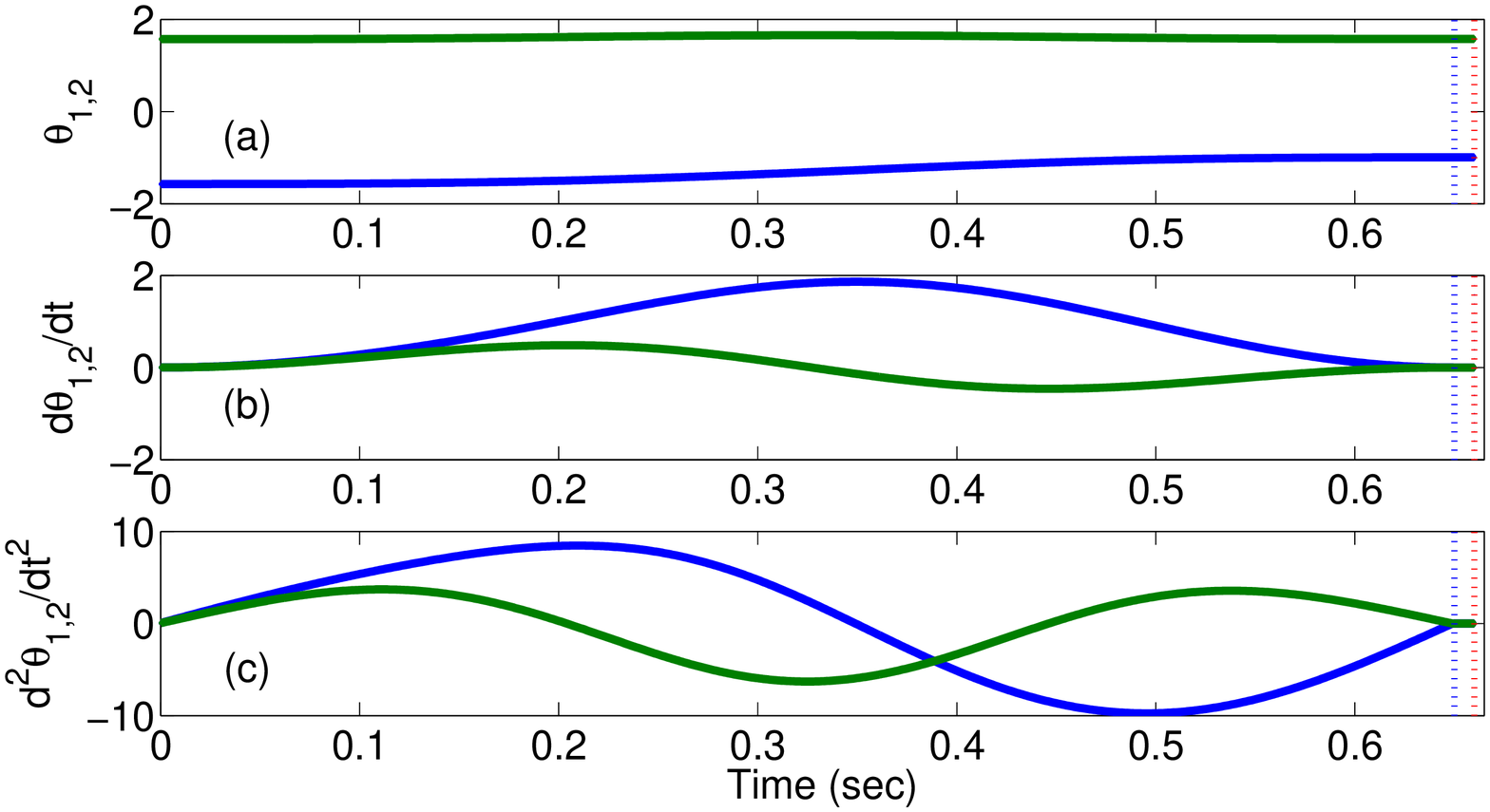}
\caption{Optimal control of straight-trajectory arm movement model with
parameters listed in Table \protect\ref{parameters}. Target is set by $%
\protect\theta_{1}(T)=-1$ and $\protect\theta_{2}(T)=\frac{\protect\pi}{2}$
but \emph{without noise}: The dynamics of the angles (a), the angle
velocities (b) and accelerations (c) (the blue solid curves for those of $%
\protect\theta _{1}$ and the green solid curves for $\protect\theta _{2}$).
The blue and red dash vertical lines stand for the start and end time points
of the duration of reaching the target.}
\label{desire}
\end{figure}

\begin{figure}[tbp]
\centering\includegraphics[width=.9\textwidth]{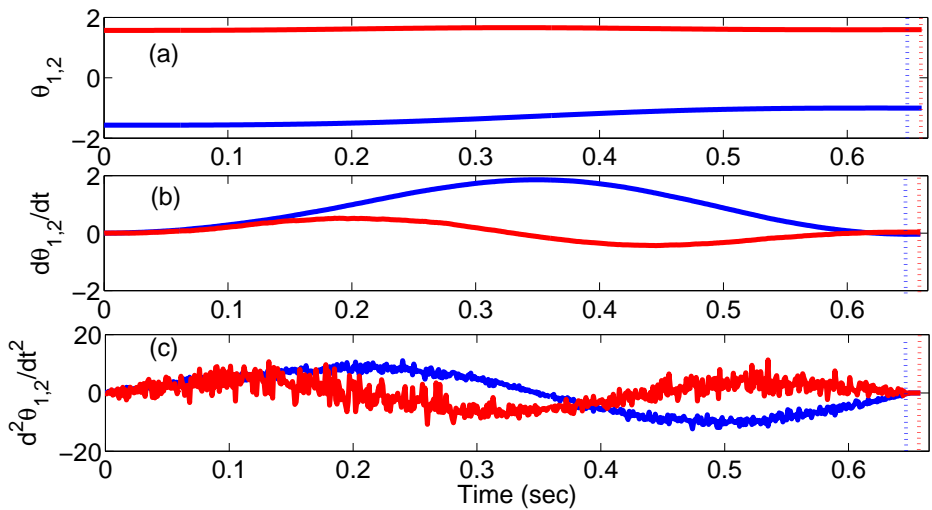}
\caption{Optimal control of straight-trajectory arm movement model \emph{%
with noise} and the same model parameters as in Fig. \protect\ref{desire},
and $\protect\alpha=0.25$, $M_{Y}=20000$: The dynamics of the angles (a),
the angle velocities (b) and accelerations (c) (the blue solid curves for
those of $\protect\theta_{1}$ and the red solid for $\protect\theta_{2}$).
The blue and red dash vertical lines stand for the start and end time points
of the duration of reaching the target.}
\label{fact}
\end{figure}

\begin{figure}[ptb]
\centering{\includegraphics[width=0.9
\textwidth]{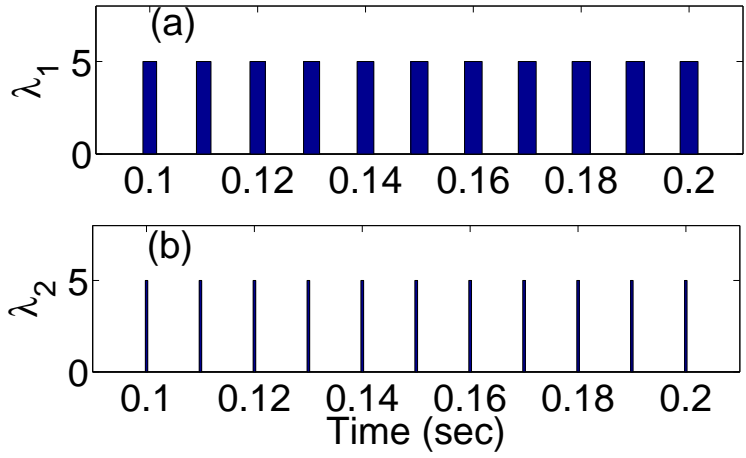}} 
\caption{Optimal control signal of Young measure of straight-trajectory arm
movement model with noise, illustrations for $\protect\lambda_{1}$ (a) and $%
\protect\lambda_{2}$ (b) in discrete-time way with $M_{Y}=5$, where width of
each bar stands for measure of $M_{Y}$ at each time bin. }
\label{figs1}
\end{figure}


\begin{figure}[tbp]
\centering\includegraphics[width=.9\textwidth]{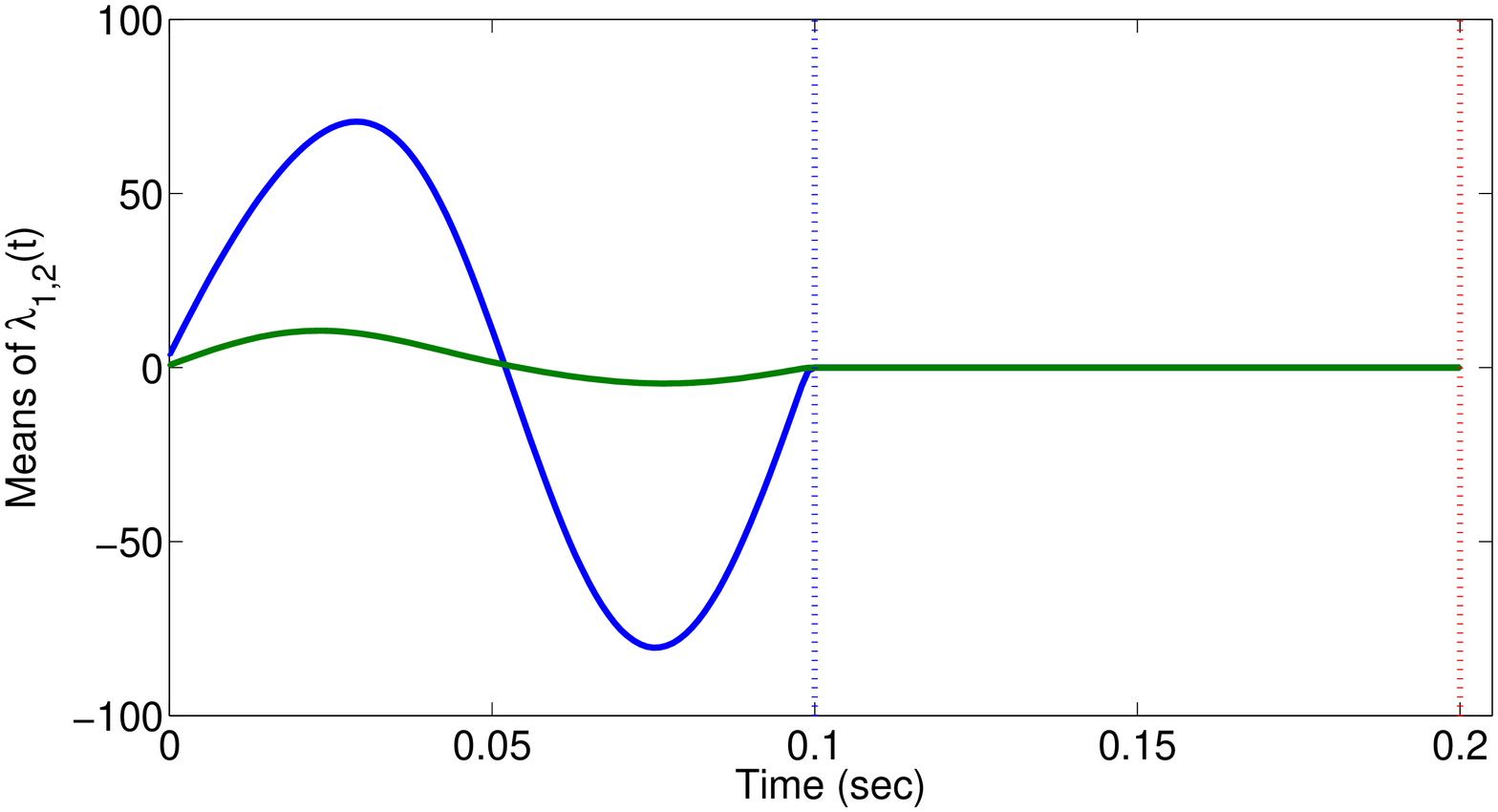}
\caption{Means of the optimal control signals $\protect\lambda _{1}(t)$
(blue solid) and $\protect\lambda _{2}(t)$ (green solid) in the
straight-trajectory arm movement example with $T=100$ (sec) and $R=100$
(msec). The blue and red dash vertical lines stand for the start and end
time points of the duration of reaching the target.}
\label{meanv1}
\end{figure}

\begin{figure}[tbp]
\centering\includegraphics[width=.9\textwidth]{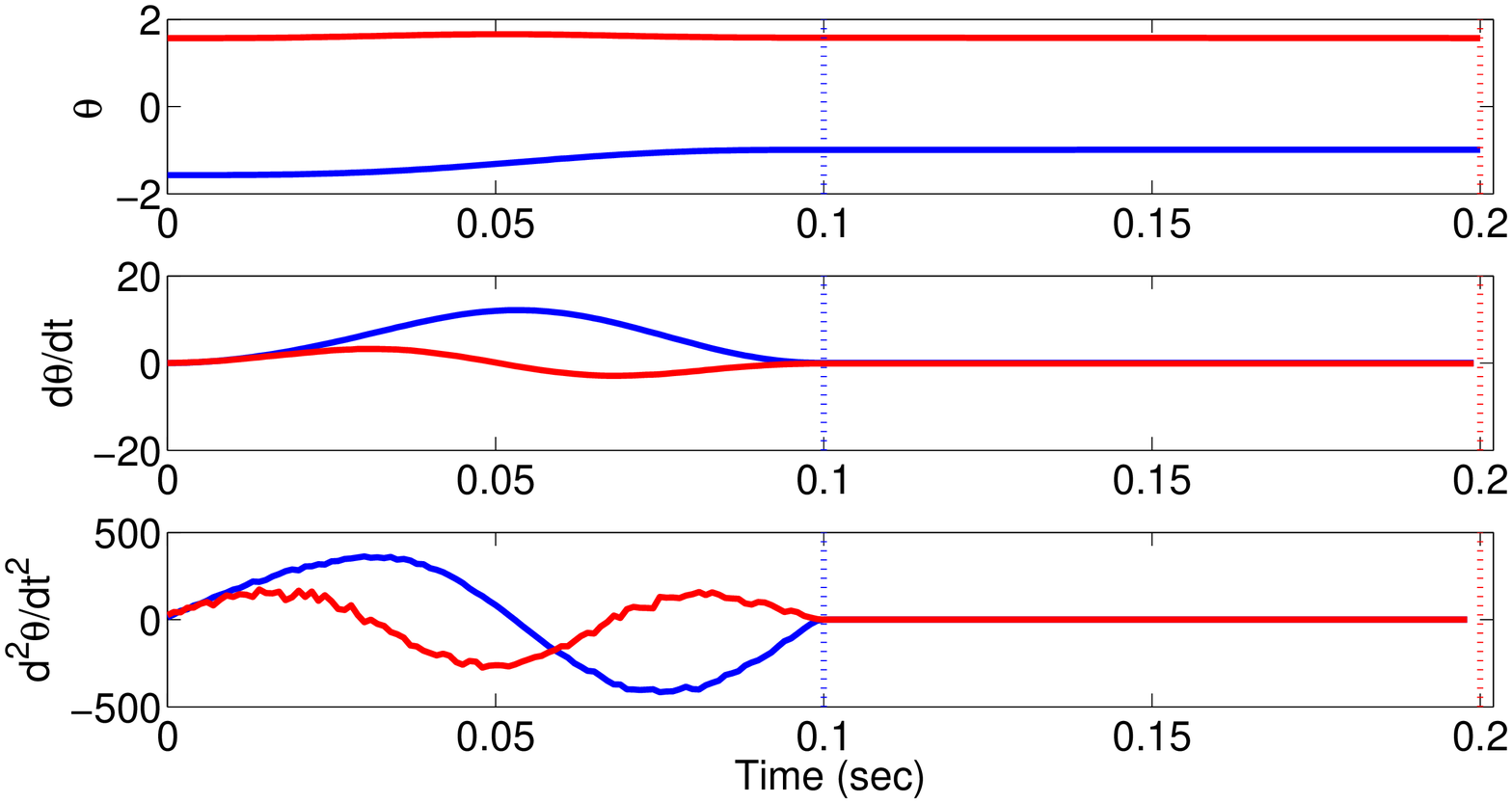}
\caption{Optimal control of straight-trajectory arm movement model \emph{%
with noise} with $T=100$ and $R=100$ (msec): The dynamics of the angles (a),
the angle velocities (b) and accelerations (c) (the blue solid curves for
those of $\protect\theta_{1}$ and the red solid curves for $\protect\theta%
_{2}$). The blue and red dash vertical lines stand for the start and end
time points of the duration of reaching the target.}
\label{factv1}
\end{figure}

\begin{figure}[ptb]
\centering{\includegraphics[width=0.9\textwidth]{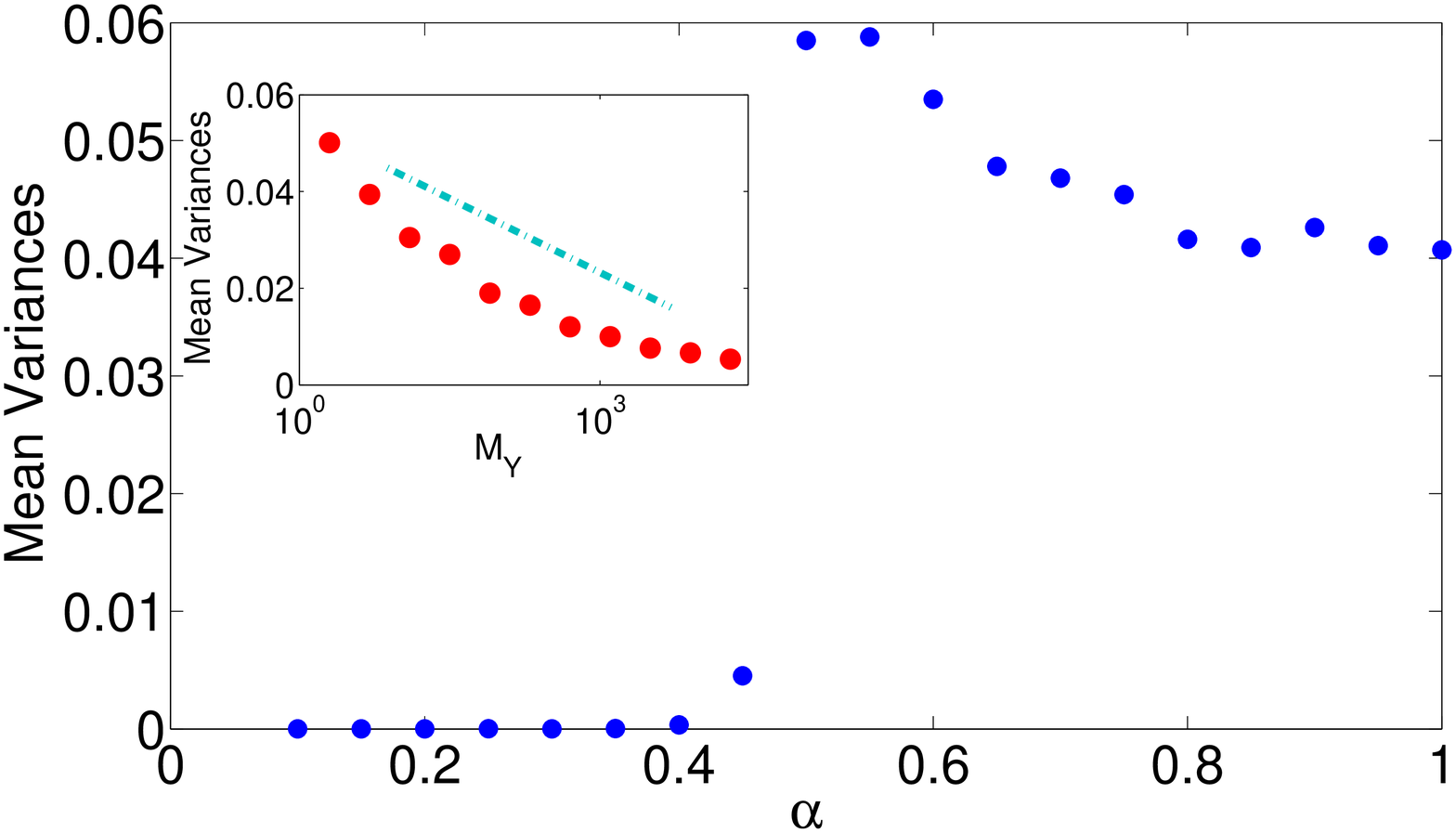}}
\caption{Performance of optimal control of straight-trajectory arm movement
model: Relationship between the executive error, measured by mean standard
variance, and dispersion index $\protect\alpha$ with $M_{Y}=20000$ and
Log-log plot (the inner plot) of the the relationship between executive
error and bound of the Young measure $M_{Y}$ with $\protect\alpha=0.25$
where the dash line is reference line with slope $-1/2+\protect\alpha=-0.25$%
, as shown in (\protect\ref{error_est}).}
\label{figs2}
\end{figure}

\begin{figure}[ptb]
\centering{\includegraphics[width=0.9\textwidth]{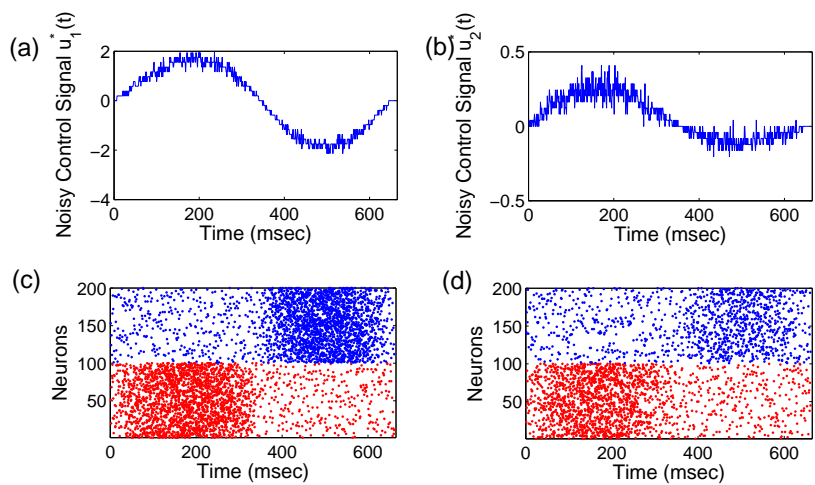}}
\caption{Spiking control of straight-trajectory arm movement model: (a).
Approximation of first component ($u_{1}^{*}(t)$) of the optimal Young
measure control signal by spike trains; (b). Approximation of second
component ($u_{2}^{\ast}(t)$) of the optimal Young measure control signal by
spike trains; (c) and (d). Approximation of optimal control signal $Q_{1,2}$
by spike trains of a set of neurons.}
\label{figs3}
\end{figure}

\end{document}